\documentclass[twoside]{article}
\usepackage{amsmath,amssymb}
\usepackage{amsthm}
\usepackage{upref}
\usepackage{graphics,latexsym,mathtools}
\usepackage{color}
\usepackage{enumerate}

\textheight22cm
\textwidth16.5cm
\oddsidemargin0cm
\evensidemargin0cm
\topmargin-1cm
\sloppy
\numberwithin{equation}{section}
\allowdisplaybreaks[1]

\theoremstyle{plain}
\newtheorem{theorem}{Theorem}[section]
\newtheorem{proposition}[theorem]{Proposition}
\newtheorem{definition}[theorem]{Definition}
\newtheorem{corollary}[theorem]{Corollary}

\theoremstyle{definition}
\newtheorem{remark}[theorem]{Remark}

\pagestyle{headings}
\sloppy

\begin{document}

\newcommand{\eq}{equation}
\newcommand{\real}{\ensuremath{\mathbb R}}
\newcommand{\comp}{\ensuremath{\mathbb C}}
\newcommand{\rn}{\ensuremath{{\mathbb R}^n}}
\newcommand{\no}{\ensuremath{\nat_0}}
\newcommand{\ganz}{\ensuremath{\mathbb Z}}
\newcommand{\zn}{\ensuremath{{\mathbb Z}^n}}
\newcommand{\As}{\ensuremath{A^s_{p,q}}}
\newcommand{\Bs}{\ensuremath{B^s_{p,q}}}
\newcommand{\Fs}{\ensuremath{F^s_{p,q}}}
\newcommand{\nat}{\ensuremath{\mathbb N}}
\newcommand{\Om}{\ensuremath{\Omega}}
\newcommand{\di}{\ensuremath{{\mathrm{d}}}}
\newcommand{\vp}{\ensuremath{\varphi}}
\newcommand{\hra}{\ensuremath{\hookrightarrow}}
\newcommand{\supp}{\ensuremath{\mathrm{supp \,}}}
\newcommand{\ve}{\ensuremath{\varepsilon}}
\newcommand{\vk}{\ensuremath{\varkappa}}
\newcommand{\vr}{\ensuremath{\varrho}}
\newcommand{\id}{\ensuremath{\mathrm{id}}}
\newcommand{\Rea}{\ensuremath{\mathrm{Re\,}}}
\newcommand{\Ima}{\ensuremath{\mathrm{Im\,}}}
\newcommand{\loc}{\ensuremath{\mathrm{loc}}}
\newcommand{\wh}{\ensuremath{\widehat}}
\newcommand{\wt}{\ensuremath{\widetilde}}
\newcommand{\ol}{\ensuremath{\overline}}
\newcommand{\os}{\ensuremath{\overset}}
\newcommand{\Nc}{\ensuremath{\mathcal N}}
\newcommand{\SRn}{\mathcal{S}(\rn)}
\newcommand{\SpRn}{\mathcal{S}'(\rn)}
\newcommand{\Dd}{\mathrm{D}}
\newcommand{\Ft}{\mathcal{F}}
\newcommand{\Fti}{\mathcal{F}^{-1}}
\newcommand{\bli}{\begin{enumerate}[{\upshape\bfseries (i)}]}
  \newcommand{\eli}{\end{enumerate}}
\newcommand{\nd}{\ensuremath {{n}}} 
\newcommand{\Ae}{\ensuremath{A^{s_1}_{p_1,q_1}}}  
\newcommand{\Az}{\ensuremath{A^{s_2}_{p_2,q_2}}}  
\newcommand{\open}[1]{\smallskip\noindent\fbox{\parbox{\textwidth}{\color{blue}\bfseries\begin{center}
      #1 \end{center}}}\\ \smallskip}
\newcommand{\red}[1]{{\color{red}#1}}
\newcommand{\blue}[1]{{\color{blue}#1}}
\renewcommand{\blue}[1]{{#1}}
\newcommand{\purple}[1]{{\color{purple}#1}}
\newcommand{\cyan}[1]{{\color{cyan}#1}}
\newcommand{\green}[1]{{\color{green}#1}}
\newcommand{\unterbild}[1]{{\noindent\refstepcounter{figure}\upshape\bfseries
    Figure {\thefigure}{\label{#1}}}%\addcontentsline{lof}{figure}{Figure \ref{#1}}
}%
\newcommand{\ignore}[1]{}

\title{Nuclear Fourier transforms}

\author{Dorothee D. Haroske, Leszek Skrzypczak and Hans Triebel}
\date{}%\today}
\maketitle

\begin{abstract}
{
The paper deals with the problem under which conditions for the parameters $s_1,s_2\in\real$, $1\leq p,q_1,q_2\leq\infty$ the Fourier transform $\Ft$ is a nuclear mapping from $A^{s_1}_{p,q_1}(\rn)$ into $A^{s_2}_{p,q_2}(\rn)$, where $A\in\{B,F\}$ stands for a space of Besov or Triebel-Lizorkin type, and $n\in\nat$. It extends the recent paper \cite{T21} where the compactness of $\Ft$ acting in the same type of spaces was studied.\\
  }
  %\open{missing}

\noindent  {\em Keywords:}~ Fourier transform, nuclear operators, Besov spaces, Triebel-Lizorkin spaces \\
  {\em MSC (2010):}~46E35, 47B10
\end{abstract}

\section{Introduction}   \label{S1}

Let $\Ft$,
\begin{\eq}   \label{1.1}
\big(\Ft \vp\big)(\xi) = (2 \pi)^{-n/2} \int_{\rn} e^{-i x \xi} \, \vp (x) \, \di x, \qquad \vp \in \SRn, \quad \xi \in \rn,
\end{\eq}
be the classical Fourier transform, extended  in the usual way to $\SpRn$, $n\in \nat$. The mapping properties
\begin{\eq}   \label{1.2}
\Ft\SRn = \SRn, \qquad \Ft \SpRn = \SpRn,
\end{\eq}
and
\begin{\eq}   \label{1.3}
\Ft:  \ L_p (\rn) \hra L_{p'} (\rn), \quad  1\le p \le 2, \quad \frac{1}{p} + \frac{1}{p'} =1, \quad \Ft L_2 (\rn) = L_2 (\rn),
\end{\eq}
are cornerstones of Fourier {analysis}. These basic assertions have been complemented in \cite{T21} covering in particular the following
observation. Let $\As (\rn)$ with
\begin{\eq}   \label{1.4}
A \in \{B,F \}, \quad 1<p,q_1,q_2 < \infty \quad \text{and} \quad s_1 \in \real, \quad s_2 \in \real,
\end{\eq}
be the usual function spaces of Besov and Triebel-Lizorkin type. We denote 
\begin{\eq}   \label{1.5}
d^n_p = 2n \left( \frac{1}{p} - \frac{1}{2} \right), \quad 1<p<\infty, \quad n\in \nat,
\end{\eq}
and introduce
\begin{\eq}   \label{1.6}
\tau^{n+}_p = \max (0, d^n_p) \quad \text{and} \quad \tau^{n-}_p = \min (0, d^n_p).
\end{\eq}
Then
\begin{\eq}   \label{1.7}
\Ft: \quad A^{s_1}_{p,q_1} (\rn) \hra A^{s_2}_{p,q_2} (\rn)
\end{\eq}
is compact if
\begin{\eq}   \label{1.8}
\text{both} \quad s_1 > \tau^{n+}_p \quad \text{and} \quad s_2 < \tau^{n-}_p\ .
\end{\eq}
{If  (independently)}
\begin{\eq}   \label{1.9}
\text{either} \quad s_1 < \tau^{n+}_p \qquad \text{or} \quad s_2 > \tau^{n-}_p,
\end{\eq}
{then there is no continuous embedding of type \eqref{1.7}.} {We refer to Figure~\ref{fig-1} below for some diagram.} 
It was one of the main aims of \cite{T21} to deal with the degree of compactness of $\Ft$ in \eqref{1.7} in case of \eqref{1.8}, expressed in terms of 
entropy numbers. In the present paper we ask for conditions ensuring that the mapping $\Ft$ in \eqref{1.7} is nuclear. Recall that a
linear continuous mapping $T: \ A\hra B$ from the {\ignore{complex} Banach space $A$ into the \ignore{complex} Banach space}  $B$ is called nuclear if it
can be represented as
\begin{\eq}   \label{1.10}
Tf = \sum^\infty_{k=1} (f, a_k') b_k, \qquad \{a'_k \} \subset A', \quad \{b_k \} \subset B,
\end{\eq}
such that $\sum^\infty_{k=1} \| a'_k \, | A'\| \cdot \| b_k \, | B \|$ is finite, where $A'$ is the dual of $A$. In particular, any
nuclear mapping is compact. We refer to Section~\ref{S3.1} for further details and some history of the topic.

Our main result is Theorem~\ref{T3.1} characterising in particular under which conditions the compact mapping
\eqref{1.7} with \eqref{1.8} is nuclear. {We refer to Figure~\ref{fig-3} below for some illustration.} 

The paper is organised as follows. In Section~\ref{S2} we collect definitions and some ingredients. This includes wavelet characterisations and weighted generalisations
$\As (\rn, w_\alpha)$ of the above unweighted spaces $\As (\rn)$, where the function $w_\alpha (x) = (1+|x|^2)^{\alpha/2}$, $\alpha \in \real$, is a so-called  `admissible' weight. In Section~\ref{S3}
we recall first some already known properties  about nuclear embeddings between these spaces and prove Theorem~\ref{T3.1}. This will be complemented by related assertions for some limiting cases. Finally, in Section~\ref{S4} we collect some more or less immediate consequences when $\Ft$ is considered as mapping between weighted spaces of type $\As (\rn, w_\alpha)$.

\section{Definitions and ingredients}   \label{S2}
\subsection{Definitions and some basic properties}   \label{S2.1}
We use standard notation. Let $\nat$ be the collection of all natural numbers and $\no = \nat \cup \{0 \}$. Let $\rn$ be {the} Euclidean $n$-space where
$n\in \nat$. Put $\real = \real^1$.
Let $\SRn$ be the Schwartz space of all complex-valued rapidly decreasing infinitely differentiable functions on $\rn$ and let $\SpRn$ be the dual space {consisting} of all tempered distributions on $\rn$.
Furthermore, $L_p (\rn)$ with $0< p \le \infty$, is the standard complex quasi-Banach space with respect to the Lebesgue measure, quasi-normed by
\begin{\eq}   \label{2.1}
\| f \, | L_p (\rn) \| = \Big( \int_{\rn} |f(x)|^p \, \di x \Big)^{1/p}
\end{\eq}
with the obvious modification if $p=\infty$.  
As usual, $\ganz$ is the collection of all integers; and $\zn$, $n\in \nat$, denotes the
lattice of all points $m= (m_1, \ldots, m_n) \in \rn$ with $m_k \in \ganz$. 
\smallskip~

If $\vp \in \SRn$, then
\begin{\eq}  \label{2.2}
\wh{\vp} (\xi) = (\Ft \vp)(\xi) = (2\pi )^{-n/2} \int_{\rn} e^{-ix \xi} \vp (x) \, \di x, \qquad \xi \in  \rn,
\end{\eq}
denotes the Fourier transform of \vp. As usual, $\Fti \vp$ and $\vp^\vee$ stand for the inverse Fourier transform, given by the right-hand side of
\eqref{2.2} with $i$ in place of $-i$. Here $x \xi$ stands for the scalar product in \rn. Both $\Ft$ and $\Fti$ are extended to $\SpRn$ in the
standard way. Let $\vp_0 \in \SRn$ with
\begin{\eq}   \label{2.3}
\vp_0 (x) =1 \ \text{if $|x|\le 1$} \quad \text{and} \quad \vp_0 (x) =0 \ \text{if $|x| \ge 3/2$},
\end{\eq}
and let
\begin{\eq}   \label{2.4}
\vp_k (x) = \vp_0 (2^{-k} x) - \vp_0 (2^{-k+1} x ), \qquad x\in \rn, \quad k\in \nat.
\end{\eq}
Since
\begin{\eq}   \label{2.5}
\sum^\infty_{j=0} \vp_j (x) =1 \qquad \text{for} \quad x\in \rn,
\end{\eq}
$\vp =\{ \vp_j \}^\infty_{j=0}$ forms a smooth dyadic resolution of unity. The entire analytic functions $(\vp_j \wh{f} )^\vee (x)$ make sense pointwise in $\rn$ for any $f\in \SpRn$. 
\smallskip~

\begin{definition}   \label{D2.1}
Let $\vp = \{ \vp_j \}^\infty_{j=0}$ be the above dyadic resolution  of unity. Let $s\in \real$, $0<q\leq\infty$. 
\bli
\item  
Let  $0<p \le \infty$. 
Then $\Bs (\rn)$ is the collection of all $f \in \SpRn$ such that
\begin{\eq}   \label{2.6}
\| f \, | \Bs (\rn) \|_{\vp} = \Big( \sum^\infty_{j=0} 2^{jsq} \big\| (\vp_j \wh{f})^\vee \, | L_p (\rn) \big\|^q \Big)^{1/q}
\end{\eq}
is finite $($with the usual modification if $q= \infty)$. 
\item Let $0<p<\infty$. Then $\Fs (\rn)$ is the collection of all $f\in \SpRn$ such that
\begin{\eq}   \label{2.7}
\| f \, | \Fs (\rn) \|_{\vp} 
= \Big\| \Big( \sum^\infty_{j=0} 2^{jsq} \big| (\vp_j \wh{f})^\vee (\cdot) \big|^q \Big)^{1/q} \, | L_p (\rn) \Big\|
\end{\eq}
is finite $($with the usual modification if $q=\infty)$.
\eli
\end{definition}

\begin{remark}   \label{R2.2}
These well--known inhomogeneous spaces are independent of the above resolution of unity $\vp$ according to \eqref{2.3}--\eqref{2.5} in the sense of 
equivalent quasi--norms. This justifies the omission of the subscript $\vp$ in \eqref{2.6}, \eqref{2.7} in the sequel. Let us mention here, in particular, the series of monographs \cite{T83,T92,T06,T20}, where also one finds further historical references, explanations and discussions. \blue{The above restriction to $p<\infty$ in case of $\Fs{(\rn)}$ is the usual one, {though   many important results  could be extended to $F^s_{\infty,q}{(\rn)}$, cf. \cite{T20} for the definition and  properties of the spaces as well as  historical remarks. Here we stick to the above setting.} 
\ignore{	though in \cite{T20} many important results could be extended to $F^s_{\infty,q}{(\rn)}$. But here we stick to the above setting. }}

As usual we write $\As (\rn)$, $A \in \{B,F \}$, if the related
assertion applies equally to the $B$--spaces $\Bs (\rn)$ and the $F$--spaces $\Fs (\rn)$. We deal mainly with the $B$--spaces. The 
$F$--spaces can often be incorporated in related assertions using the embedding
\begin{\eq}   \label{2.8}
B^s_{p, \min(p,q)} (\rn) \hra \Fs (\rn) \hra B^s_{p, \max(p,q)} (\rn),
\end{\eq}
$s\in \real$, $0<p<\infty$, $0<q \le \infty$. Let
\begin{\eq}   \label{2.9}
w_\alpha (x) = (1 + |x|^2 )^{\alpha/2}, \qquad x\in \rn, \quad \alpha \in \real.
\end{\eq}
Then $I_\alpha$,
\begin{\eq}   \label{2.10}
I_\alpha: \quad f \mapsto \big( w_\alpha \wh{f}\, \big)^\vee = \big( w_\alpha f^\vee \big)^\wedge, \qquad f\in \SpRn, \quad \alpha \in\real,
\end{\eq}
is a lift in the spaces $\As (\rn)$, $s\in \real$, $0<p<\infty$, $0<q \le \infty$, mapping $\As (\rn)$ isomorphically onto 
$A^{s-\alpha}_{p,q} (\rn)$,
\begin{\eq}   \label{2.11}
I_\alpha \As (\rn) = A^{s-\alpha}_{p,q} (\rn), \quad \|(w_\alpha \wh{f} \, )^\vee | A^{s-\alpha}_{p,q} (\rn) \| \sim \|f \, | \As 
(\rn) \|,
\end{\eq}
equivalent quasi--norms, see \cite[Theorem 1.22, p.\,16]{T20} and the {references given there}. Of interest for us will be the Sobolev spaces
\begin{\eq}   \label{2.12}
H^s_p (\rn) = F^s_{p,2} (\rn), \qquad s\in \real, \quad 1<p<\infty,
\end{\eq}
their Littlewood--Paley characterisations and
\begin{\eq}  \label{2.13}
I_s H^s_p (\rn) = L_p (\rn), \quad \| (w_s \wh{f})^\vee | L_p (\rn) \| = \|f \, | H^s_p (\rn) \|.
\end{\eq}
\end{remark}

\begin{remark}   \label{R2.3}
For our arguments below we need the weighted counterparts of the spaces $\As (\rn)$ as introduced in Definition~\ref{D2.1}. Let $s,p,q$
be as there and let $w_\alpha$ be the weight according to \eqref{2.9}. Then $\As (\rn, w_\alpha)$ is the collection of all $f\in 
\SpRn$ such that \eqref{2.6}, \eqref{2.7} with $L_p (\rn, w_\alpha)$ in place of $L_p (\rn)$ is finite. Here $L_p (\rn, w_\alpha)$
is the complex quasi--Banach space quasi--normed by
\begin{\eq} \label{2.14}
\| f \, | L_p (\rn, w_\alpha) \| = \| w_\alpha f \, | L_p (\rn) \|, \qquad 0<p \le \infty, \quad \alpha \in \real.
\end{\eq}
These spaces have some remarkable properties  which will be of some use for us later on, see also \cite{HT1} and \cite[Sect.~4.2]{ET}. In particular, for all spaces $f \mapsto 
w_\alpha f$ is an isomorphic mapping,
\begin{\eq}   \label{2.15}
\| w_\alpha f \, | \As (\rn) \| \sim \|f \, | \As (\rn, w_\alpha) \|, \qquad \alpha \in \real,
\end{\eq}
and for all spaces the lifting \eqref{2.11} can be extended from the unweighted spaces to their weighted counterparts,
\begin{\eq}   \label{2.16}
\begin{aligned}
I_\alpha \As (\rn, w_\beta) &= A^{s-\alpha}_{p,q} (\rn, w_\beta), \\
 \|(w_\alpha \wh{f} )^\vee | A^{s-\alpha}_{p,q} (\rn, w_\beta) \|
&\sim \| f\, | \As (\rn, w_\beta) \|,
\end{aligned}
\end{\eq}
$\alpha \in \real$, $\beta \in \real$. Both (substantial) assertions are covered by \cite[Theorem 6.5, pp.\,265--266]{T06} and the
{references given there}. \blue{Note that weights of type $w_\alpha$ given by \eqref{2.9} are also special Muckenhoupt weights when $\alpha>-n$, that is,  $w_\alpha \in \mathcal{A}_\infty$ if $\alpha>-n$. }
\end{remark}

\subsection{Wavelet characterisations}    \label{S2.2}
Our arguments below rely on wavelet representations for some (unweighted) $B$--spaces. We collect what we need following \cite[Section
1.2.1, pp.\,7--10]{T20}. There one finds explanations and related references. \blue{Let us, in particular, refer to the standard monographs for this topic  \cite{Dau92,Mal98,Mey92,Woj97}. A short summary can also be found in \cite[Sect.~1.7]{T06}}. %We will be brief.

As usual, $C^{u} (\real)$ with $u\in
\nat$ collects all bounded complex-valued continuous functions on $\real$ having continuous bounded derivatives up to order $u$ inclusively. Let
\begin{\eq}   \label{2.17}
\psi_F \in C^{u} (\real), \qquad \psi_M \in C^{u} (\real), \qquad u \in \nat,
\end{\eq}
be  real compactly supported Daubechies wavelets with
\begin{\eq}   \label{2.18}
\int_{\real} \psi_M (x) \, x^v \, \di x =0 \qquad \text{for all $v\in \no$ with $v<u$.}
\end{\eq}
Let $n\in \nat$ and let
\begin{\eq}   \label{2.19}
G = (G_1, \ldots, G_n) \in G^0 = \{F,M \}^n
\end{\eq}
which means that $G_r$ is either $F$ or $M$. Furthermore, let
\begin{\eq}   \label{2.20}
G= (G_1, \ldots, G_n) \in G^j = \{F, M \}^{n*}, \qquad j \in \nat,
\end{\eq}
which means that $G_r$ is either $F$ or $M$, where $*$ indicates that at least one of the components of $G$ must be an $M$. Let
\begin{\eq}   \label{2.21}
\psi^j_{G,m} (x) = \prod^n_{l=1} \psi_{G_l} \big(2^j x_l -m_l \big), \qquad G\in G^j, \quad m \in \zn,
\end{\eq}
$x\in \rn$, where (now) $j \in \no$. Then we may assume that 
\begin{\eq}   \label{2.22}
 \big\{ 2^{jn/2} \psi^j_{G,m}: \ j \in \no, \ G\in G^j, \ m \in \zn \big\}
\end{\eq}
is an orthonormal basis in $L_2 (\rn)$. Let 
\begin{\eq}   \label{2.23}
1\le p,q {\leq} \infty \qquad \text{and} \qquad s\in \real.
\end{\eq}

%\open{Should we add some remark about the general result in case of $p,q=\infty$, even if not needed here? }

Let $u\in \nat$ such that $|s| <u$. Then $f\in \Bs (\rn)$ can be represented as
\begin{\eq}   \label{2.24}
f = \sum_{j=0}^\infty \sum_{G\in G^j} \sum_{m \in \zn} 2^{jn} (f, \psi^j_{G,m} ) \, \psi^j_{G.m}
\end{\eq}
with
\begin{\eq}   \label{2.25}
\|f \, | \Bs (\rn) \| \sim \bigg( \sum_{j=0}^\infty 2^{j(s- \frac{n}{p})q} \sum_{G\in G^j} \Big( \sum_{m\in \zn} 2^{jnp} \big|
(f, \psi^j_{G,m} ) \big|^p \Big)^{q/p} \bigg)^{1/q},
\end{\eq}
where the equivalence constants are independent of $f$, {with the usual modification if $\max(p,q)=\infty$}. In particular, the series in \eqref{2.24} converges unconditionally in {$\SpRn$ and unconditionally even in $\Bs(\rn)$ if $\max(p,q)<\infty$.} Furthermore \eqref{2.22} is a basis in $\Bs (\rn)$ {if $\max(p,q)<\infty$}. From \eqref{2.25} and \eqref{2.8} it follows that
\begin{\eq}   \label{2.26}
\| \psi^j_{G.m} \, | \As (\rn) \| \sim 2^{j(s- \frac{n}{p})}, \qquad j\in \no, \quad m\in \zn, \quad G\in G^j,
\end{\eq}
$1 \le p,q {\leq} \infty$, $s\in \real$, $A\in \{B,F \}$ {(with $p<\infty$ when $A=F$)},  where the equivalence constants can be chosen independently of $j,G,m$.

\subsection{Mappings}   \label{S2.3}
We recall some mapping properties of the Fourier transform $\Ft$ obtained in \cite{T21}. This covers also (more or less) what has already
been said in the Introduction, \eqref{1.4}--\eqref{1.9}.

Let %\red{$B^s_p (\rn) = B^s_{p,p} (\rn)$},
$n\in \nat$, $1<p<\infty$ and $s\in \real$. We use the notation
\begin{\eq}   \label{2.27}
d^n_p = 2n \left( \frac{1}{p} - \frac{1}{2} \right), \qquad 1<p < \infty, \quad n\in \nat,
\end{\eq}
and define
\begin{\eq}  \label{2.28}
\tau^{n+}_p = \max (0, d^n_p) \qquad \text{and} \qquad \tau^{n-}_p = \min (0, d^n_p).
\end{\eq}
%
%\begin{minipage}{0.5\textwidth}
We denote by
%\begin{\eq}   %\label{2.29}
\[
X^s_p (\rn) =
\begin{cases}
L_p (\rn) &\text{if $2\le p<\infty$, $s=0$}, \\
{B^s_{p,p}} (\rn) & \text{if $2 \le p <\infty$, $s>0$}, \\
{B^s_{p,p}} (\rn) &\text{if $1<p \le 2$, $s \ge d^n_p$},
\end{cases}
\]%\end{\eq}
and
\[%\begin{\eq}   %\label{2.30}
Y^s_p (\rn) =
\begin{cases}
{B^s_{p,p}} (\rn) &\text{if $2 \le p <\infty$, $s \le d^n_p$}, \\
{B^s_{p,p}} (\rn) &\text{if $1<p \le 2$, $s<0$}, \\
L_p (\rn) &\text{if $1<p\le 2$, $s=0$}.
\end{cases}
\]%\end{\eq}
For convenience, we have sketched in the usual $(\frac1p,s)$-diagram in Figure~\ref{fig-1} below  the corresponding areas for the definition of $X^s_p$ and $Y^s_p$. Here any space $\Bs(\rn)$ is indicated by its parameters $s$ and $p$, neglecting $q$.

\noindent\begin{minipage}{\textwidth}
  ~\hfill\begin{picture}(0,0)%
\includegraphics{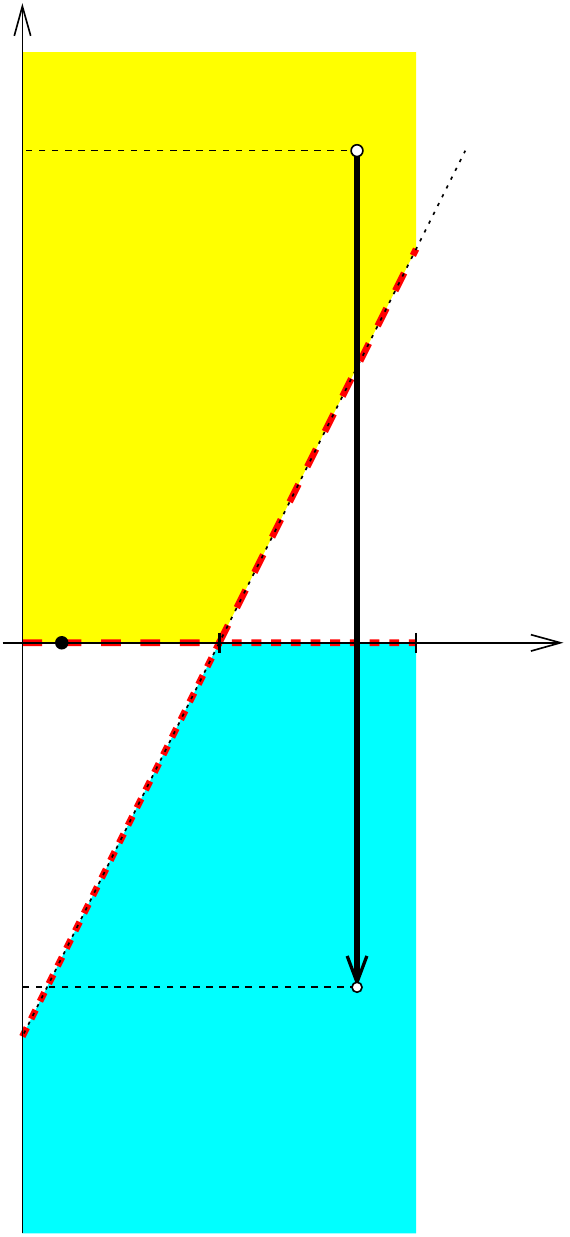}%
\end{picture}%
\setlength{\unitlength}{4144sp}%
\begingroup\makeatletter\ifx\SetFigFont\undefined%
\gdef\SetFigFont#1#2#3#4#5{%
  \reset@font\fontsize{#1}{#2pt}%
  \fontfamily{#3}\fontseries{#4}\fontshape{#5}%
  \selectfont}%
\fi\endgroup%
\begin{picture}(2589,5649)(349,-5023)
\put(2386,-376){\makebox(0,0)[lb]{\smash{{\SetFigFont{10}{12.0}{\familydefault}{\mddefault}{\updefault}{\color[rgb]{0,0,0}$s=d^n_p$}%
}}}}
\put(856,-3436){\makebox(0,0)[lb]{\smash{{\SetFigFont{10}{12.0}{\familydefault}{\mddefault}{\updefault}{\color[rgb]{0,0,0}$s=\tau^{n-}_p$}%
}}}}
\put(2026,-1546){\makebox(0,0)[lb]{\smash{{\SetFigFont{10}{12.0}{\familydefault}{\mddefault}{\updefault}{\color[rgb]{0,0,0}$\Ft$}%
}}}}
\put(1801,-1366){\makebox(0,0)[rb]{\smash{{\SetFigFont{10}{12.0}{\familydefault}{\mddefault}{\updefault}{\color[rgb]{0,0,0}$s=\tau^{n+}_p$}%
}}}}
\put(2251,-2161){\makebox(0,0)[b]{\smash{{\SetFigFont{10}{12.0}{\familydefault}{\mddefault}{\updefault}$1$}}}}
\put(1261,-2161){\makebox(0,0)[b]{\smash{{\SetFigFont{10}{12.0}{\familydefault}{\mddefault}{\updefault}$\frac12$}}}}
\put(631,-2221){\makebox(0,0)[b]{\smash{{\SetFigFont{10}{12.0}{\familydefault}{\mddefault}{\updefault}$L_p$}}}}
\put(1981,-4111){\makebox(0,0)[b]{\smash{{\SetFigFont{10}{12.0}{\familydefault}{\mddefault}{\updefault}$B^ {s_2}_{p,p}$}}}}
\put(901,-4561){\makebox(0,0)[lb]{\smash{{\SetFigFont{10}{12.0}{\familydefault}{\mddefault}{\updefault}$Y^s_p$}}}}
\put(1981, 74){\makebox(0,0)[b]{\smash{{\SetFigFont{10}{12.0}{\familydefault}{\mddefault}{\updefault}$B^{s_1}_{p,p}$}}}}
\put(901,-601){\makebox(0,0)[b]{\smash{{\SetFigFont{10}{12.0}{\familydefault}{\mddefault}{\updefault}$X^s_p$}}}}
\put(406,-4201){\makebox(0,0)[rb]{\smash{{\SetFigFont{10}{12.0}{\familydefault}{\mddefault}{\updefault}$-n$}}}}
\put(2701,-2491){\makebox(0,0)[lb]{\smash{{\SetFigFont{10}{12.0}{\familydefault}{\mddefault}{\updefault}$\frac1p$}}}}
\put(406,389){\makebox(0,0)[rb]{\smash{{\SetFigFont{10}{12.0}{\familydefault}{\mddefault}{\updefault}$s$}}}}
\put(406,-106){\makebox(0,0)[rb]{\smash{{\SetFigFont{10}{12.0}{\familydefault}{\mddefault}{\updefault}$s_1$}}}}
\put(406,-3931){\makebox(0,0)[rb]{\smash{{\SetFigFont{10}{12.0}{\familydefault}{\mddefault}{\updefault}$s_2$}}}}
\put(1936,-2161){\makebox(0,0)[rb]{\smash{{\SetFigFont{10}{12.0}{\familydefault}{\mddefault}{\updefault}$\frac1p$}}}}
\end{picture}%
\hfill~\\
~\hspace*{\fill}\unterbild{fig-1}
\end{minipage}
\smallskip~

We collect what is already known about the continuity and compactness of the map $\Ft : X^{s_1}_p(\rn) \hra Y^{s_2}_p(\rn)$.

\begin{theorem}[{\cite{T21}}]\label{Thm-comp}
  Let $1<p<\infty$, $s_1{\in\real}, s_2\in\real$ and $\tau^{n+}_p$, $\tau^{n-}_p$ be given by \eqref{2.28} with \eqref{2.27}.
  \bli
\item
    Then
\begin{\eq}   \label{2.31}
\Ft: \quad X^{s_1}_p (\rn) \hra Y^{s_2}_p (\rn) \quad \text{with $s_1 \ge \tau^{n+}_p$ and $s_2 \le \tau^{n-}_p$}
\end{\eq}
is continuous. This mapping is even compact if, and only if, both $s_1 > \tau^{n+}_p$ and $s_2 < \tau^{n-}_p$.
\item
  Furthermore, if %$X^{s_1}_p=B^{s_1}_{p,p}$ and $Y^{s_2}_p= B^{s_2}_{p,p}$,
{there is a continuous mapping}
\begin{\eq} \label{2.32}
  \Ft: \quad B^{s_1}_{p,p} (\rn) \hra B^{s_2}_{p,p} (\rn),
  %\quad \text{with $1<p<\infty$, $s_1 \in \real$, $s_2 \in \real$},
\end{\eq}
{then} %is continuous, if, and only if,
both $s_1 \ge \tau^{n+}_p$ and $s_2 \le \tau^{n-}_p$.
\eli
\end{theorem}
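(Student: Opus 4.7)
My plan breaks into three stages: continuity at the critical endpoint $(s_1,s_2)=(\tau^{n+}_p,\tau^{n-}_p)$, upgrading to compactness when the inequalities are strict, and sharpness of the range.

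\textbf{Endpoint continuity.} By duality of $\Ft$ I reduce to $1<p\le 2$, where the critical map is $\Ft\colon B^{d^n_p}_{p,p}(\rn)\to L_p(\rn)$. Decompose $\hat f=\sum_{j\ge 0}\vp_j\hat f$ via \eqref{2.3}--\eqref{2.5}; since the pieces have pairwise bounded overlap on the frequency side,
\begin{equation*}
\|\hat f\,|\,L_p(\rn)\|^p\le C\sum_{j\ge 0}\|\vp_j\hat f\,|\,L_p(\rn)\|^p.
\end{equation*}
On each annulus, Hölder's inequality against the characteristic function of $\supp\vp_j$ costs a factor $|\supp\vp_j|^{1/p-1/p'}\sim 2^{jn(2-p)/p}=2^{jd^n_p}$, and Hausdorff--Young ($p\le 2$) then controls $\|\vp_j\hat f\,|\,L_{p'}\|$ by $\|(\vp_j\hat f)^\vee\,|\,L_p\|$. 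Summing in $j$ reproduces $\|f\,|\,B^{d^n_p}_{p,p}\|^p$ exactly. The range $2\le p<\infty$ is treated symmetrically using the identity $(\vp_j\widehat{\Ft f})^\vee=\Ft(\vp_j f)$, placing the dyadic decomposition on the spatial side. The remaining continuity cases for general $s_1\ge\tau^{n+}_p$, $s_2\le\tau^{n-}_p$ follow from standard Besov embeddings and from \eqref{2.8}.

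\textbf{Compactness.} For $s_1>\tau^{n+}_p$ and $s_2<\tau^{n-}_p$, pick small $\delta,\varepsilon>0$ with $s_1-\delta\ge\tau^{n+}_p$ and $s_2+\varepsilon\le\tau^{n-}_p$. The lifting identity $\widehat{I_\delta f}=w_\delta\hat f$ combined with \eqref{2.16} converts the previous step into a continuous map
\begin{equation*}
\Ft\colon B^{s_1}_{p,p}(\rn)\longrightarrow B^{s_2+\varepsilon}_{p,p}(\rn,w_\delta).
\end{equation*}
Composition with the Haroske--Triebel compact embedding $B^{s_2+\varepsilon}_{p,p}(\rn,w_\delta)\hookrightarrow B^{s_2}_{p,p}(\rn)$, compact for every $\delta,\varepsilon>0$ (cf.\ \cite{HT1} and \cite[Sect.~4.2]{ET}), yields the required compactness. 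The reverse direction — compactness fails when either inequality is an equality — is handled by exhibiting a bounded but non-precompact sequence: modulations $e^{i\xi_k\cdot x}\phi(x)$ with $|\xi_k|\to\infty$ serve for $p\ge 2$, while rescaled wavelet atoms $c_j\psi^j_{G,0}$ normalized in $B^{s_1}_{p,p}$ serve for $1<p\le 2$.

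\textbf{Necessity.} Two families of test functions suffice. From \eqref{2.21} one has
\begin{equation*}
\Ft\psi^j_{G,m}(\xi)=2^{-jn}e^{-i2^{-j}m\cdot\xi}\widehat{\Psi_G}(2^{-j}\xi),
\end{equation*}
so for $m=0$ the Fourier support is a ball of radius $\sim 2^{-j}$, only the piece $\vp_0$ contributes to the $B^{s_2}_{p,p}$-norm, and $\|\Ft\psi^j_{G,0}\,|\,B^{s_2}_{p,p}\|\sim 2^{-jn/p'}$ independently of $s_2$; combined with $\|\psi^j_{G,0}\,|\,B^{s_1}_{p,p}\|\sim 2^{j(s_1-n/p)}$ from \eqref{2.26}, continuity forces $s_1\ge d^n_p$, while the analogous test on $\Ft^{-1}\psi^j_{G,0}$ yields $s_2\le d^n_p$. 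For modulated bumps $f_k(x)=e^{i\xi_k\cdot x}\phi(x)$, translation invariance of $B^{s_2}_{p,p}$ keeps $\|\Ft f_k\,|\,B^{s_2}_{p,p}\|$ bounded while $\|f_k\,|\,B^{s_1}_{p,p}\|\sim|\xi_k|^{s_1}$, so continuity forces $s_1\ge 0$; symmetrically $s_2\le 0$. Together, $s_1\ge\max(0,d^n_p)=\tau^{n+}_p$ and $s_2\le\min(0,d^n_p)=\tau^{n-}_p$. The main obstacle I anticipate is the compactness step: the slack from the strict inequalities must translate, via the lifting, into strict positivity of \emph{both} the weight exponent $\delta$ and the smoothness gain $\varepsilon$ simultaneously, so that the Haroske--Triebel compact embedding applies in its standard form.
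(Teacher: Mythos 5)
The paper does not actually prove this theorem: it is quoted from \cite[Theorem~3.2, Corollary~3.3]{T21}, so there is no internal proof to compare against, and your argument has to be judged on its own. It is essentially sound, and it anticipates tools the paper deploys elsewhere. The endpoint bound $\|\vp_j\hat f\,|\,L_p(\rn)\|\le |\supp\vp_j|^{1/p-1/p'}\,\|\vp_j\hat f\,|\,L_{p'}(\rn)\|\lesssim 2^{jd^n_p}\|(\vp_j\hat f)^\vee\,|\,L_p(\rn)\|$ is correct and its $p$-th powers sum to exactly the $B^{d^n_p}_{p,p}$-norm; the spatial-side version for $2\le p<\infty$ works because $\sum_j\|\vp_jf\,|\,L_p(\rn)\|^p\lesssim\|f\,|\,L_p(\rn)\|^p$ by bounded overlap. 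Your compactness step is the identity $\Ft\circ I_\delta=W_\delta\circ\Ft$ in disguise --- precisely \eqref{4.12} and \eqref{F-I-W} of Section~\ref{S4} --- and composing with the compact embedding $B^{s_2+\ve}_{p,p}(\rn,w_\delta)\hra B^{s_2}_{p,p}(\rn)$ of Proposition~\ref{emb-w-nuc}(i) (which needs only $\delta>0$ and $\ve>0$, same $p$ on both sides) is legitimate: the two slacks $s_1-\tau^{n+}_p>0$ and $\tau^{n-}_p-s_2>0$ feed $\delta$ and $\ve$ independently, so the obstacle you anticipated does not materialise. The necessity computations via $\Ft\psi^j_{G,0}=2^{-jn}\wh{\Psi_G}(2^{-j}\cdot)$, its inverse counterpart, and modulated bumps are also correct and yield exactly $s_1\ge\max(0,d^n_p)$ and $s_2\le\min(0,d^n_p)$.

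The one thin spot is the ``only if'' half of the compactness claim. Your assignment --- modulations for $p\ge2$, rescaled atoms for $1<p\le2$ --- does not match the actual case structure: each of the two endpoints $s_1=\tau^{n+}_p$ and $s_2=\tau^{n-}_p$ must be excluded for \emph{both} ranges of $p$, giving four sub-cases. For instance, at $s_2=\tau^{n-}_p=d^n_p$ with $2<p<\infty$ and $s_1>0$, modulations $e^{i\xi_k x}\phi$ prove nothing: after normalising in $B^{s_1}_{p,p}(\rn)$ their images tend to zero in $B^{d^n_p}_{p,p}(\rn)$, which is consistent with compactness. What does work there is a concentrating family on the frequency side, $g_j=\Fti\big(2^{jn/p'}\psi^j_{G,0}\big)$, which is bounded in $B^{s_1}_{p,p}(\rn)$ (being band-limited to a fixed ball) while $\|\Ft g_j\,|\,B^{d^n_p}_{p,p}(\rn)\|\sim1$ with weak convergence to zero; alternatively, duality reduces this sub-case to your atom argument at the $s_1$-endpoint for $p'\le2$. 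So the test families you name, together with the duality you invoke at the outset, do suffice --- but the sentence as written attaches them to the wrong cases, and you should either spell out the four sub-cases or explicitly dualise two of them.
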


\begin{remark}
  These assertions are covered by \cite[Theorem~3.2, Corollary~3.3]{T21}. There one also finds results about the entropy numbers $e_k(\Ft)$, $k\in\nat$, of $\Ft$  which further characterise the `degree of compactness'. 
\end{remark}

\begin{corollary}\label{F-comp-cor}
  Let $1<p<\infty$, $0<q_1,q_2\leq\infty$, $s_1, s_2\in\real$. Let $A\in \{B, F\}$. Then
  \begin{\eq}   \label{2.33}
\Ft: \quad A^{s_1}_{p,q_1}(\rn) \hra A^{s_2}_{p,q_2} (\rn) 
\end{\eq}
is compact if both $s_1 > \tau^{n+}_p$ and $s_2 < \tau^{n-}_p$.\\
{If $s_1 < \tau^{n+}_p$ or $s_2 > \tau^{n-}_p$}, {then there is no continuous map \eqref{2.33}}.
\end{corollary}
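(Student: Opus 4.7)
The plan is to reduce the statement to Theorem~\ref{Thm-comp} by sandwiching the spaces $A^{s_i}_{p,q_i}(\rn)$ between Besov spaces $B^{\tilde s_i}_{p,p}(\rn)$, which by the case distinctions in the definition of $X^s_p$, $Y^s_p$ coincide with $X^{\tilde s_1}_p(\rn)$ and $Y^{\tilde s_2}_p(\rn)$ once $\tilde s_1 > \tau^{n+}_p$ and $\tilde s_2 < \tau^{n-}_p$ (this is the reason for the piecewise definition of those families).

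For the compactness assertion, given $s_1 > \tau^{n+}_p$ and $s_2 < \tau^{n-}_p$, I would pick $\tilde s_1, \tilde s_2$ with $\tau^{n+}_p < \tilde s_1 < s_1$ and $s_2 < \tilde s_2 < \tau^{n-}_p$. Using \eqref{2.8} together with the standard monotonicity of $B$--spaces in the smoothness parameter, I get
\begin{\eq*}
A^{s_1}_{p,q_1}(\rn)\;\hra\; B^{\tilde s_1}_{p,p}(\rn)\;=\;X^{\tilde s_1}_p(\rn), \qquad Y^{\tilde s_2}_p(\rn)\;=\;B^{\tilde s_2}_{p,p}(\rn)\;\hra\; A^{s_2}_{p,q_2}(\rn),
\end{\eq*}
(for the $F$-case one inserts $B^{\tilde s_1}_{p,p}\hra B^{s_1}_{p,\min(p,q_1)}\hra F^{s_1}_{p,q_1}$ on the left, and dually on the right; the strict inequality on the smoothness index makes the embedding work regardless of the value of $q_i$). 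Composing with the compact map $\Ft\colon X^{\tilde s_1}_p(\rn)\hra Y^{\tilde s_2}_p(\rn)$ from Theorem~\ref{Thm-comp}(i) yields compactness of $\Ft\colon A^{s_1}_{p,q_1}(\rn)\hra A^{s_2}_{p,q_2}(\rn)$.

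For the non-existence part, I would argue by contradiction using Theorem~\ref{Thm-comp}(ii). Suppose $\Ft\colon A^{s_1}_{p,q_1}(\rn)\to A^{s_2}_{p,q_2}(\rn)$ is continuous. Pick any $\tilde s_1>s_1$ and any $\tilde s_2<s_2$. As above, by \eqref{2.8} and monotonicity in the smoothness parameter there exist continuous embeddings
\begin{\eq*}
B^{\tilde s_1}_{p,p}(\rn)\;\hra\; A^{s_1}_{p,q_1}(\rn), \qquad A^{s_2}_{p,q_2}(\rn)\;\hra\; B^{\tilde s_2}_{p,p}(\rn),
\end{\eq*}
so that $\Ft\colon B^{\tilde s_1}_{p,p}(\rn)\to B^{\tilde s_2}_{p,p}(\rn)$ is continuous. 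Theorem~\ref{Thm-comp}(ii) then forces $\tilde s_1\ge \tau^{n+}_p$ and $\tilde s_2\le \tau^{n-}_p$. Since $\tilde s_1$ can be chosen arbitrarily close to $s_1$ from above and $\tilde s_2$ arbitrarily close to $s_2$ from below, we deduce $s_1\ge \tau^{n+}_p$ and $s_2\le \tau^{n-}_p$, contradicting the hypothesis that $s_1<\tau^{n+}_p$ or $s_2>\tau^{n-}_p$.

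The step requiring a bit of care is the verification that the intermediate Besov spaces $B^{\tilde s_i}_{p,p}(\rn)$ really do coincide with $X^{\tilde s_1}_p(\rn)$ respectively $Y^{\tilde s_2}_p(\rn)$ in every subcase of the piecewise definition ($p<2$, $p=2$, $p>2$, and the borderline $s=0$); this is however a direct inspection of the definition once the strict inequalities $\tilde s_1>\tau^{n+}_p$ and $\tilde s_2<\tau^{n-}_p$ are in force. Everything else is just routine embedding bookkeeping combined with Theorem~\ref{Thm-comp}.
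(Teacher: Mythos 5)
Your proposal is correct and follows essentially the same route as the paper, which disposes of the corollary in one line as an immediate consequence of Theorem~\ref{Thm-comp} combined with \eqref{2.8} and the elementary embeddings $A^{s+\ve}_{p,q_1}(\rn)\hra B^s_{p,p}(\rn)\hra A^{s-\ve}_{p,q_1}(\rn)$. Your write-up merely makes explicit the choice of intermediate smoothness parameters, the identification of $B^{\tilde s_i}_{p,p}(\rn)$ with $X^{\tilde s_1}_p(\rn)$ and $Y^{\tilde s_2}_p(\rn)$, and the limiting argument for the necessity part, all of which check out.
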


\begin{proof}
This is an immediate consequence of Theorem~\ref{Thm-comp} together with elementary embeddings like \eqref{2.8} and $A^{s+\ve}_{p,q_1}(\rn)\hra B^s_{p,p}(\rn) \hra A^{s-\ve}_{p,q_1}(\rn)$ for arbitrary $\ve>0$. 
\end{proof}

The above result shows that $s_1 = \tau^{n+}_p$ and $s_2 = \tau^{n-}_p$ are natural barriers if one wishes to study continuous and
compact mappings of type \eqref{2.33}. The observation justifies \eqref{1.7}--\eqref{1.9}.
% In particular, $\Ft$ in \eqref{2.33} is compact if both $s_1 > \tau^{n+}_p$ and $s_2 < \tau^{n-}_p$, whereas there is no continuous embedding \eqref{2.33} if either $s_1 < \tau^{n+}_p$ or $s_2 > \tau^{n-}_p$.
It also implies that related restrictions for $s_1$ and $s_2$ in what follows are natural. 
%
%\open{It would be nice, of course, to have also `if, and only if' for the compactness in case of $A=F$. Via the elementary embedding this seems only clear for $q_2\leq p\leq q_1$.}
%
{We complement the above assertion in Section~\ref{S3.3} where we shall also deal with the limiting cases $p=1$ and $p=\infty$.}

%\open{In Section~\ref{S3.3} we get some extension of the compactness result in case of $p=1$ or $p=\infty$ -- should we already refer to this result (below) at this place here?}

\section{Nuclear mappings}    \label{S3}
\subsection{Preliminaries}   \label{S3.1}
A linear continuous mapping $T: A \hra B$ from the (complex) Banach space $A$ into the (complex) Banach space $B$ is called nuclear if it can be represented as
\begin{\eq}   \label{3.1}
Tf = \sum_{k=1}^\infty (f, a_k' )\, b_k, \qquad \{a_k' \} \subset A', \quad \{b_k \} \subset B,
\end{\eq}
such that $\sum_{k=1}^\infty \|a_k' |A' \| \cdot \| b_k |B \|$ is finite. Here $A'$ is the dual of $A$. Then
\begin{\eq}   \label{3.2}
\| T \, | \Nc(A,B)\| = \inf \sum_{k=1}^\infty \|a_k' |A' \| \cdot \| b_k |B \|
\end{\eq}
is the related nuclear norm, where the infimum is taken over all representations \eqref{3.1}. In particular any nuclear mapping is
compact. The collection of all nuclear mappings
between complex Banach spaces is a symmetric operator ideal. Here symmetric means that $T': \, B' \hra A'$ is nuclear if $T: \, A
\hra B$ is nuclear, \cite[8.2.6, p.\,108]{Pie78}, \cite[p.\,280]{Pie07}. 

\begin{remark}
  Grothendieck introduced the concept of nuclearity in \cite{grothendieck} more than 60 years ago. It provided the basis for many famous developments in functional analysis afterwards, we refer to \cite{Pie78}, and, in particular, to \cite{Pie07} for further historic details.  In Hilbert spaces $H_1,H_2$, the nuclear operators $\mathcal{N}(H_1,H_2)$ coincide with the trace class $S_1(H_1,H_2)$, consisting of those $T$ with singular numbers $(s_k(T))_{k\in\nat} \in \ell_1$. It is well known from the remarkable Enflo result \cite{enflo} that there are compact operators between Banach spaces which cannot be approximated by finite-rank operators.
 This led to a number of -- meanwhile well-established  and famous -- methods to circumvent this difficulty and find alternative ways to `measure' the compactness or `degree' of compactness of an operator, e.g. the asymptotic behaviour of its approximation or entropy numbers. In all these problems, the decomposition
of a given compact operator into a series is an essential proof technique. It turns out that in many of the recent contributions \cite{T17,CoDoKu,CoEdKu,HaS20,HaLeoSk} studying nuclearity, a key tool in the arguments are new decomposition techniques as well, adapted to the different spaces. This is also our intention now.
\end{remark}

In addition to the tools described above we will rely on the following two observations about nuclear embeddings between function
spaces.

Let $\Om$ be a bounded Lipschitz domain in \rn, $n\in \nat$, (bounded interval if $n=1$). Then $\As (\Om)$ is, as usual, the 
restriction of the spaces $\As (\rn)$ as introduced in Definition~\ref{D2.1} and Remark~\ref{R2.2}. 

\begin{proposition}\label{nuc-dom}
Let
\begin{\eq}  \label{3.3}
1<p_1, p_2, q_1 , q_2 <\infty \qquad \text{and} \qquad s_1 \in \real, \quad s_2 \in \real. 
\end{\eq}
\bli
\item
The embedding
\begin{\eq}   \label{3.4}
\id: \quad A^{s_1}_{p_1, q_1} (\Om) \hra A^{s_2}_{p_2,q_2} (\Om)
\end{\eq}
is compact, if, and only if,
\begin{equation}\label{id_Omega-comp}
s_1-s_2 > n \max\left(\frac{1}{p_1}-\frac{1}{p_2},0\right).
\end{equation}
\item
The embedding \eqref{3.4} 
is nuclear if, and only if,
\begin{\eq}   \label{3.5}
s_1 - s_2 > n -n \max \left( \frac{1}{p_2} - \frac{1}{p_1}, 0 \right).
\end{\eq}
\eli
\end{proposition}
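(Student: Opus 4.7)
The plan is to recognise that both parts of this proposition are not to be proved from scratch but rather assembled from results already available in the literature, and to explain how the pieces fit together.

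Part (i) is a classical compact embedding theorem on bounded Lipschitz domains. I would cite \cite{T06} (and \cite{ET}): continuity of the embedding $\id$ in \eqref{3.4} is equivalent to $s_1-s_2\ge n\max(1/p_1-1/p_2,0)$, and sharpening the inequality to strict inequality gives precisely compactness. The standard route uses the wavelet characterisation of $\As(\Omega)$ (a straightforward variant of Section~\ref{S2.2} adapted to $\Omega$), which reduces the embedding to a diagonal operator between the associated sequence spaces of wavelet coefficients; there, classical Jawerth--Franke-type embeddings together with elementary $\ell_p$-inclusions give the characterisation.

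For part (ii), the strategy is to use the same wavelet isomorphism to reduce $\id$ to a block-diagonal map $\mathrm{diag}(\id_j)_{j\in\no}$ between sequence spaces, where at level $j$ the map $\id_j$ is an embedding between finite-dimensional $\ell_{p_1}$- and $\ell_{p_2}$-type spaces of dimension comparable to $2^{jn}$ (because $\Omega$ is bounded, only $\lesssim 2^{jn}$ wavelets at scale $j$ have supports meeting $\Omega$). Combining the classical estimate for the nuclear norm of $\id : \ell_{p_1}^N \to \ell_{p_2}^N$, namely $\nu(\id)\sim N^{1-\max(1/p_2-1/p_1,0)}$, with the weights $2^{j(s_i-n/p_i)}$ from \eqref{2.25}, one obtains a geometric series in $j$ which is summable precisely under \eqref{3.5}. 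This yields the sufficiency. For the necessity, one factorises $\id_j$ through $\id$ using bump-wavelet test families supported around dyadic cubes inside $\Omega$, obtains a lower bound $\nu(\id) \gtrsim \nu(\id_j)$ times the appropriate $2^{j\cdot}$-factor, and lets $j\to\infty$. This is precisely the scheme developed in \cite{T17,CoDoKu,CoEdKu,HaS20,HaLeoSk}, so I would simply cite those papers, most efficiently \cite{T17} and \cite{HaS20}, for the full statement.

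The main obstacle, and the reason the proof is normally presented as a package of citations, is the necessity part in (ii): establishing sharp lower bounds for the nuclear norm requires somewhat delicate duality arguments (using that nuclear operators form a symmetric ideal, so one may pass to the adjoint) combined with the Chevet--Persson--Pietsch type lower bound for the nuclear norm of finite-dimensional $\ell_{p_1}^N\hookrightarrow \ell_{p_2}^N$ embeddings. The sufficiency side, by contrast, is essentially a bookkeeping exercise once the wavelet decomposition and the finite-dimensional upper bound are in hand. Accordingly my proposed ``proof'' of Proposition~\ref{nuc-dom} consists of a short paragraph indicating the reduction to sequence spaces via \eqref{2.24}--\eqref{2.25} (adapted to $\Omega$) and then referring the reader to \cite[Theorem]{T17} and \cite{HaS20} for the sharp necessity assertion and to \cite{ET,T06} for part (i).
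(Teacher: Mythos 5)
Your approach coincides exactly with the paper's: Proposition~\ref{nuc-dom} is not proved there at all --- part (ii) is attributed to \cite[Theorem, p.\,3039]{T17} (with \cite{Pie-r-nuc,PiTri} for the prehistory and \cite{HaS20} for the limiting cases $p=1,\infty$), and part (i) is the classical compactness criterion from \cite{ET,T06} --- so a citation-based treatment is precisely what is intended, and your sketch of the underlying wavelet/sequence-space scheme is a faithful description of how those references proceed.

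One concrete slip in your sketch, however, should be corrected before you rely on it: the finite-dimensional nuclear norm you quote has the indices swapped. The correct asymptotics are
$\| \id : \ell_{p_1}^N \hra \ell_{p_2}^N \,|\, \Nc \| \sim N^{1-\max\left(\frac{1}{p_1}-\frac{1}{p_2},\,0\right)}$,
that is, $N$ when $p_1\ge p_2$ and $N^{1-1/p_1+1/p_2}$ when $p_1\le p_2$ (lower bound by trace duality applied to a multiple of the identity $\ell_{p_2}^N\hra\ell_{p_1}^N$, upper bound by averaging the rank-one decomposition over a Walsh or discrete Fourier basis), and not $N^{1-\max(1/p_2-1/p_1,0)}$ as you wrote. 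Feeding your version into the level-wise bookkeeping with weights $2^{-j(s_1-s_2)+jn(1/p_1-1/p_2)}$ and $N_j\sim 2^{jn}$ yields the conditions $s_1-s_2>n+n\left(\frac{1}{p_1}-\frac{1}{p_2}\right)$ for $p_1\le p_2$ and $s_1-s_2>n+2n\left(\frac{1}{p_1}-\frac{1}{p_2}\right)$ for $p_2\le p_1$, both strictly stronger than \eqref{3.5} whenever $p_1\neq p_2$; with the corrected exponent the computation reproduces \eqref{3.5} exactly.
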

The result (ii) can be found in \cite[Theorem, p.\,3039]{T17}, clarifying some limiting cases compared with what was already known before, cf. \cite{Pie-r-nuc,PiTri}. {In \cite{HaS20} we also dealt with the situations $p=1$ and $p=\infty$. Let us briefly illustrate the situation in the diagram below.}\\

\noindent\begin{minipage}{\textwidth}
  ~\hfill\begin{picture}(0,0)%
\includegraphics{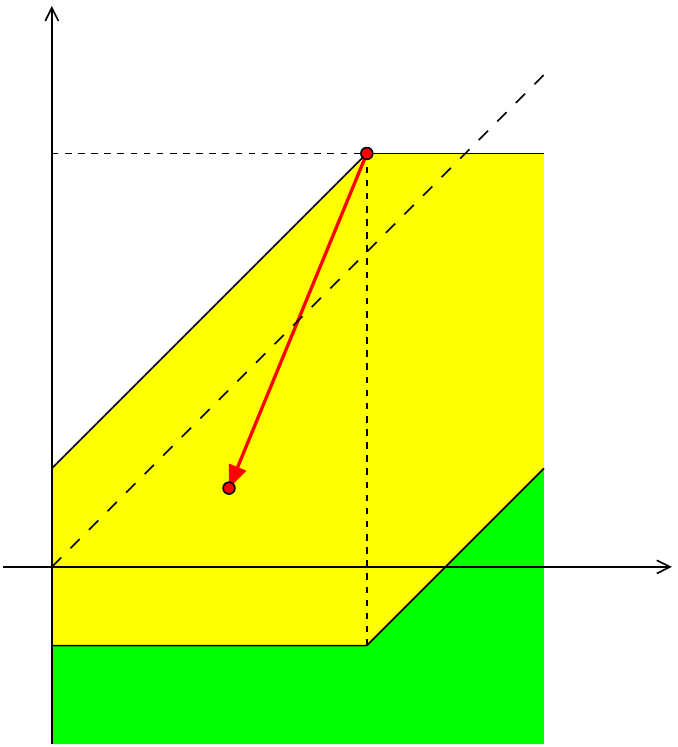}%
\end{picture}%
\setlength{\unitlength}{4144sp}%
\begingroup\makeatletter\ifx\SetFigFont\undefined%
\gdef\SetFigFont#1#2#3#4#5{%
  \reset@font\fontsize{#1}{#2pt}%
  \fontfamily{#3}\fontseries{#4}\fontshape{#5}%
  \selectfont}%
\fi\endgroup%
\begin{picture}(3084,3399)(1114,-4483)
\put(1261,-4066){\makebox(0,0)[rb]{\smash{{\SetFigFont{10}{12.0}{\familydefault}{\mddefault}{\updefault}{\color[rgb]{0,0,0}$s_1-\nd$}%
}}}}
\put(1306,-3256){\makebox(0,0)[rb]{\smash{{\SetFigFont{10}{12.0}{\familydefault}{\mddefault}{\updefault}{\color[rgb]{0,0,0}$s_1-\frac{\nd}{p_1}$}%
}}}}
\put(1306,-1816){\makebox(0,0)[rb]{\smash{{\SetFigFont{10}{12.0}{\familydefault}{\mddefault}{\updefault}{\color[rgb]{0,0,0}$s_1$}%
}}}}
\put(1306,-1231){\makebox(0,0)[rb]{\smash{{\SetFigFont{10}{12.0}{\familydefault}{\mddefault}{\updefault}{\color[rgb]{0,0,0}$s$}%
}}}}
\put(2611,-1636){\makebox(0,0)[lb]{\smash{{\SetFigFont{10}{12.0}{\familydefault}{\mddefault}{\updefault}{\color[rgb]{0,0,0}$\Ae(\Omega)$}%
}}}}
\put(2791,-3841){\makebox(0,0)[rb]{\smash{{\SetFigFont{10}{12.0}{\familydefault}{\mddefault}{\updefault}{\color[rgb]{0,0,0}$\frac{1}{p_1}$}%
}}}}
\put(3601,-3841){\makebox(0,0)[lb]{\smash{{\SetFigFont{10}{12.0}{\familydefault}{\mddefault}{\updefault}{\color[rgb]{0,0,0}$1$}%
}}}}
\put(4096,-3841){\makebox(0,0)[b]{\smash{{\SetFigFont{10}{12.0}{\familydefault}{\mddefault}{\updefault}{\color[rgb]{0,0,0}$\frac1p$}%
}}}}
\put(3646,-1501){\makebox(0,0)[lb]{\smash{{\SetFigFont{10}{12.0}{\familydefault}{\mddefault}{\updefault}{\color[rgb]{0,0,0}$s=\frac{\nd}{p}$}%
}}}}
\put(2071,-3481){\makebox(0,0)[lb]{\smash{{\SetFigFont{10}{12.0}{\familydefault}{\mddefault}{\updefault}{\color[rgb]{0,0,0}$\Az(\Omega)$}%
}}}}
\put(3196,-2716){\makebox(0,0)[b]{\smash{{\SetFigFont{10}{12.0}{\familydefault}{\mddefault}{\updefault}{\color[rgb]{0,0,0}compact}%
}}}}
\put(2386,-2941){\makebox(0,0)[lb]{\smash{{\SetFigFont{10}{12.0}{\familydefault}{\mddefault}{\updefault}{\color[rgb]{1,0,0}$\id_\Omega$}%
}}}}
\put(1936,-4291){\makebox(0,0)[b]{\smash{{\SetFigFont{10}{12.0}{\familydefault}{\mddefault}{\updefault}{\color[rgb]{0,0,0}nuclear}%
}}}}
\end{picture}%
\hfill~\\
~\hspace*{\fill}\unterbild{fig-2}
\end{minipage}
\smallskip~

Secondly we need the counterpart of this result for weighted spaces $\As (\rn, w_\alpha)$ as introduced in Remark~\ref{R2.3} with
$w_\alpha$ as in \eqref{2.9}. Let $p_1, p_2, q_1, q_2$ and $s_1, s_2$ be as in \eqref{3.3}.
Let $-\infty < \alpha_2 < \alpha_1 <\infty$. We consider  the embedding
\begin{\eq}   \label{3.6}
\id_\alpha: \quad \Ae(\rn, w_{\alpha_1} ) \hra \Az(\rn, w_{\alpha_2} )\quad\text{where}\quad \alpha=\alpha_1-\alpha_2>0.
\end{\eq}

\begin{proposition}\label{emb-w-nuc}
  Let {$1\leq p_1<\infty,\ 1\leq  p_2\leq \infty$ (with $p_2<\infty$ for $F$-spaces)}, $1\leq q_1,q_2\leq\infty$, $s_1,s_2\in\real$, and $\alpha =\alpha_1-\alpha_2>0$.
  \bli
\item
  $\id_\alpha$ given by \eqref{3.6} is compact if, and only if,
  \begin{equation}\label{3.6a}
    \frac{\alpha}{n} >  \max\left(\frac{1}{p_2}-\frac{1}{p_1},0\right)\quad\text{and}\quad \frac{s_1-s_2}{n} > \max\left(\frac{1}{p_1}-\frac{1}{p_2},0\right).
    \end{equation}
\item
  $\id_\alpha$ given by \eqref{3.6} is nuclear if, and only if,
  \begin{equation}\label{3.7}
    \frac{\alpha}{n} > 1 +  \min\left(\frac{1}{p_2}-\frac{1}{p_1},0\right)\quad\text{and}\quad \frac{s_1-s_2}{n} > 1 + \min\left(\frac{1}{p_1}-\frac{1}{p_2},0\right).
    \end{equation}
  \eli
%is nuclear if, and only if,
%\begin{\eq}   
%\begin{cases}
%s_1 - \frac{n}{p_1} - s_2 + \frac{n}{p_2} > n, &\text{$\alpha >n$ for $p_2 \le p_1$}, \\
%s_1 - s_2 >n, &\text{$\alpha >n - n (\frac{1}{p_1} - \frac{1}{p_2})$ for $p_2 > p_1$},
%\end{cases}
%\end{\eq}
%where $\alpha = \alpha_1 - \alpha_2$.
\end{proposition}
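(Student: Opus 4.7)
The plan is to reduce both assertions to the bounded-domain statements in Proposition~\ref{nuc-dom} via a dyadic spatial decomposition matched to the weight $w_\alpha$. Fix a smooth resolution of unity $\{\eta_k\}_{k\geq 0}\subset\SRn$ with $\eta_0$ supported in $\{|x|\leq 2\}$ and, for $k\geq 1$, $\eta_k$ supported in the annulus $\{2^{k-1}\leq|x|\leq 2^{k+1}\}$, on which $w_{\alpha_j}(x)\sim 2^{k\alpha_j}$. The localization principle for $\As(\rn)$ yields a quasi-unconditional decomposition $f=\sum_k \eta_k f$, and accordingly $\id_\alpha=\sum_k T_k$, where $T_k$ acts on the $\eta_k$-piece.

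\textbf{Sufficiency in (ii).} Each $T_k$ factors, via the linear dilation $y=2^{-k}x$, through $\id_\Omega$ from Proposition~\ref{nuc-dom}(ii) on a fixed annular Lipschitz domain $\Omega\subset\rn$. Using the multiplier isomorphism \eqref{2.15}, the scaling of the unweighted $\As(\rn)$-norms, and the nuclear norm of $\id_\Omega$, one obtains
\begin{\eq}
  \|T_k\, |\, \Nc(\Ae(\rn, w_{\alpha_1}), \Az(\rn, w_{\alpha_2}))\| \leq c\, 2^{-k(\delta_\alpha+\delta_s)},
\end{\eq}
where $\delta_\alpha>0$ is equivalent to the first inequality in \eqref{3.7} (weight decay beating the $L_p$-scaling) and $\delta_s>0$ to the second (the smoothness scaling, inherited from Proposition~\ref{nuc-dom}(ii) on $\Omega$). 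Summing the resulting geometric series yields $\id_\alpha\in\Nc$. Part~(i) is proved by the same decomposition, with nuclear norms replaced by operator norms and Proposition~\ref{nuc-dom}(i) invoked in place of its nuclear counterpart; here the weaker thresholds \eqref{3.6a} already guarantee convergence.

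\textbf{Necessity in (ii).} Restricting $\id_\alpha$ to functions supported in the unit ball — where $w_{\alpha_j}\sim 1$ — produces a nuclear embedding between unweighted spaces on a bounded Lipschitz domain, which by Proposition~\ref{nuc-dom}(ii) forces the second inequality of \eqref{3.7}. The first inequality is extracted from the symmetry of nuclear operators under duality (\cite[8.2.6, p.~108]{Pie78}) combined with the identification $(\As(\rn, w_\alpha))'\cong A^{-s}_{p',q'}(\rn, w_{-\alpha})$ in the reflexive range $1<p,q<\infty$: the dual map swaps $(p_1,p_2)$ for $(p_1',p_2')$ and turns the $\alpha$-inequality into exactly an $s$-inequality, to which the restriction argument above applies. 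The boundary cases $p_j\in\{1,\infty\}$ are handled by the extensions of Proposition~\ref{nuc-dom} in \cite{HaS20}, and the $F$-case is reduced to the $B$-case through \eqref{2.8}.

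\textbf{Main obstacle.} The delicate point is the exact tracking of the scaling exponents $\delta_\alpha, \delta_s$: since the $\As(\rn)$-norm is inhomogeneous, pure dilation on $\rn$ does not produce a clean power law. The cleanest route is to perform the dilations after passing to the wavelet expansion \eqref{2.24}--\eqref{2.25}, where dilation is diagonal on the coefficient sequence, so that both the weight factor $2^{-k\alpha}$ and the smoothness factor $2^{-k(s_1-s_2)+kn(1/p_1-1/p_2)}$ can be read off directly and matched against the thresholds in \eqref{3.7}.
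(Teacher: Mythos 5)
First, note that the paper does not prove this proposition at all: part (i) is quoted from \cite{HT1}, \cite{ET}, \cite{HaSk08}, and part (ii) from \cite[Theorem 3.12]{HaS20} combined with the lifting \eqref{2.16}. So the real question is whether your reconstruction would work, and there are two concrete gaps. The first is in the sufficiency of \eqref{3.7}. Your dyadic annuli carry $\sim 2^{kn}$ unit cells each, and the nuclear norm is only \emph{subadditive}; summing $\|T_{k,Q}\,|\Nc\|\lesssim 2^{-k\alpha}$ over the $2^{kn}$ cells of the $k$-th annulus and then over $k$ converges only for $\alpha>n$. That is the sharp threshold only when $p_1\ge p_2$; for $p_1<p_2$ the condition in \eqref{3.7} reads $\alpha/n>1+(1/p_2-1/p_1)$, which is strictly weaker than $\alpha>n$, and no amount of triangle inequality over cells (or dilation of the whole annulus to a fixed domain, which moreover couples the $\alpha$- and $s$-conditions into a single inequality rather than two separate ones) will reach it. The mechanism that produces the exact exponent $1+\min(1/p_2-1/p_1,0)$ is the characterisation of nuclear \emph{diagonal} operators $\ell_{p_1}\to\ell_{p_2}$ (Tong's theorem), applied to the sequence $\big((1+|m|)^{-\alpha}\big)_{m\in\zn}$ after a wavelet/atomic reduction to weighted sequence spaces; this is precisely what \cite[Theorem 3.12]{HaS20} packages, and it is the missing idea in your sketch.

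The second gap is in your necessity argument for the $\alpha$-condition. Dualising $\id_\alpha$ gives $A^{-s_2}_{p_2',q_2'}(\rn,w_{-\alpha_2})\hra A^{-s_1}_{p_1',q_1'}(\rn,w_{-\alpha_1})$, whose weight gap is again $-\alpha_2-(-\alpha_1)=\alpha$ and whose smoothness gap is again $-s_2-(-s_1)=s_1-s_2$, with $(p_1,p_2)$ replaced by $(p_2',p_1')$. A short computation shows that \emph{both} inequalities in \eqref{3.7} are invariant under this substitution: duality is a symmetry of the problem and does not convert the $\alpha$-inequality into an $s$-inequality, so your restriction-to-the-unit-ball argument cannot be recycled this way. (What does interchange smoothness and weight is conjugation by $\Ft$ as in \eqref{F-I-W}, but invoking mapping properties of $\Ft$ here would be circular for the purposes of this paper.) The standard fix is to test $\id_\alpha$ on lattice translates $\psi(\cdot-m)$, $m\in\zn$, of a fixed bump: the restriction of $\id_\alpha$ to their closed span is equivalent to the diagonal operator $\ell_{p_1}\to\ell_{p_2}$ with entries $(1+|m|)^{-\alpha}$, and Tong's theorem then forces $\alpha/n>1+\min(1/p_2-1/p_1,0)$. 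Your remaining steps (necessity of the $s$-condition by localising to a unit ball, sufficiency of \eqref{3.6a} by the analogous operator-norm bookkeeping, reduction of the $F$-case via \eqref{2.8}) are sound.
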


For the compactness result (i) we refer to \cite[Thm.~2.3]{HT1},  \cite[Thm. {and Rem.}~4.2.3]{ET} (in the context of so-called admissible weights) and \cite[Prop.~2.8]{HaSk08} (in the context of Muckenhoupt weights). 
The nuclearity part (ii) is covered by \cite[Theorem 3.12, p.\,14]{HaS20} combined with the lifting \eqref{2.16}, see also \cite[Cor.~3.15, {p.22}]{HaS20}.

% \end{document}   

\subsection{Main assertion}    \label{S3.2}
\blue{We first restrict ourselves to the non-limiting situation, that is, we assume 
$1<p,q<\infty$. We consider the limiting cases when $p,q\in \{1,\infty\}$ in Section~\ref{S3.3} below.} Let $s\in \real$. Then
\begin{equation}   \label{3.8} 
\As (\rn)' = A^{-s}_{p',q'} (\rn), \qquad 
\frac{1}{p} + \frac{1}{p'} = \frac{1}{q} + \frac{1}{q'} =1,
\end{equation}
is the well--known duality in the framework of the dual pairing 
$\big( \SRn, \SpRn \big)$, cf. \cite[Theorem 2.11.2, p.\,178]{T83}.
Let $\Ft$ be the Fourier transform as introduced in Section~\ref{S2.1} and let $f\in \As (\rn)$ be expanded according to \eqref{2.24}.
%\end{document}
Then
\begin{\eq}   \label{3.9}
\begin{aligned}
\Ft f &= \sum_{j=0}^\infty \sum_{G\in G^j} \sum_{m\in \zn} 2^{jn} \big(\Ft f,  \psi^j_{G,m} \big) \, \psi^j_{G,m} \\
&= \sum_{j=0}^\infty \sum_{G\in G^j} \sum_{m\in \zn} 2^{jn} \big(f, \Ft \psi^j_{G,m} \big) \, \psi^j_{G,m}
\end{aligned}
\end{\eq}
follows from $\Ft' = \Ft$ in the context of the dual pairing $\big( \SRn, \SpRn \big)$ and $(\Ft f, \psi^j_{G,m} ) = (f, \Ft\psi^j_{G,m})$
what can be justified by \eqref{3.8} and the properties of the wavelets $\psi^j_{G,m}$.

Our main result in this paper reads as follows.

\begin{theorem}   \label{T3.1}
Let 
$1<p,q_1,q_2<\infty$  and let $s_1 \in \real$, $s_2 \in \real$. Then
\begin{\eq}    \label{3.10}
\Ft: \quad A^{s_1}_{p, q_1} (\rn) \hra A^{s_2}_{p, q_2} (\rn)
\end{\eq}
is nuclear if, and only if, both
\begin{\eq}   \label{3.11}
s_1 >
\begin{cases}
n &\text{for $1<p \le 2,$} \\
\frac{2n}{p} &\text{for $2<p <\infty,$}
\end{cases}
\quad and \quad
s_2 <
\begin{cases}
-2n (1 - \frac{1}{p}) &\text{for $1<p \le 2$}, \\
-n & \text{for $2<p <\infty.$}
\end{cases} 
\end{\eq}
\end{theorem}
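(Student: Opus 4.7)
The plan is to split into sufficiency and necessity, and in each direction to treat $1<p\le 2$ directly and deduce the range $2\le p<\infty$ via the symmetry of the nuclear operator ideal together with $\Ft'=\Ft$ and the duality $\As(\rn)'=A^{-s}_{p',q'}(\rn)$ from \eqref{3.8}.

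For sufficiency in the range $1<p\le 2$, I would factor $\Ft$ as the composition of two continuous maps with one nuclear weighted embedding. Fix $\ve>0$ small. From \eqref{2.8} one has the elementary embedding $A^{s_1}_{p,q_1}(\rn)\hra H^{s_1-\ve}_p(\rn)=F^{s_1-\ve}_{p,2}(\rn)$. Combining the lift \eqref{2.11} with the Hausdorff--Young inequality (valid for $1\le p\le 2$) yields
\[
\|\Ft f\,|\,L_{p'}(\rn,w_{s_1-\ve})\|=\|\Ft I_{s_1-\ve}f\,|\,L_{p'}(\rn)\|\le C\,\|I_{s_1-\ve}f\,|\,L_p(\rn)\|=C\,\|f\,|\,H^{s_1-\ve}_p(\rn)\|,
\]
so that $\Ft:H^{s_1-\ve}_p(\rn)\to L_{p'}(\rn,w_{s_1-\ve})$ is bounded. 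Identifying $L_{p'}(\rn,w_{s_1-\ve})=F^0_{p',2}(\rn,w_{s_1-\ve})$, the third map is the identity embedding into $A^{s_2}_{p,q_2}(\rn)$; by Proposition~\ref{emb-w-nuc}(ii) with $\alpha=s_1-\ve$ its nuclearity conditions read
\[
\tfrac{s_1-\ve}{n}>1+\min\bigl(\tfrac1p-\tfrac1{p'},0\bigr)=1,\qquad \tfrac{-s_2}{n}>1+\min\bigl(\tfrac1{p'}-\tfrac1p,0\bigr)=2-\tfrac{2}{p},
\]
and for small enough $\ve$ these reduce exactly to the assumptions $s_1>n$ and $s_2<-2n(1-\tfrac1p)$ from \eqref{3.11} in the range $1<p\le 2$. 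By the operator-ideal property, the entire composition, and hence $\Ft$, is nuclear. For $p\ge 2$, the symmetry of the nuclear ideal together with $\Ft'=\Ft$ gives that $\Ft:A^{s_1}_{p,q_1}(\rn)\hra A^{s_2}_{p,q_2}(\rn)$ is nuclear if and only if $\Ft:A^{-s_2}_{p',q_2'}(\rn)\hra A^{-s_1}_{p',q_1'}(\rn)$ is nuclear; applying the already-settled case with $p'\le 2$ rewrites its hypotheses as $s_1>\tfrac{2n}{p}$ and $s_2<-n$, matching \eqref{3.11}.

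For necessity, Corollary~\ref{F-comp-cor} already excludes $s_1<\tau^{n+}_p$ or $s_2>\tau^{n-}_p$ since nuclear operators are compact, but the sharp thresholds in \eqref{3.11} are strictly stronger. I would again focus on $1<p\le 2$ and handle $p\ge 2$ by duality. Assuming $\Ft$ nuclear, the plan is to exploit the wavelet representation \eqref{3.9} together with the wavelet characterisation \eqref{2.24}--\eqref{2.25} and the normalisation \eqref{2.26} to transfer nuclearity to a matrix condition on the associated sequence spaces; comparison with the wavelet-level representation of an identity embedding of type \eqref{3.6} between weighted spaces, combined with the sharp characterisation \eqref{3.7} of Proposition~\ref{emb-w-nuc}(ii), should force the bounds $s_1>n$ and $s_2<-2n(1-\tfrac1p)$.

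The main obstacle I foresee is the necessity. Sufficiency reduces cleanly to Proposition~\ref{emb-w-nuc}(ii) through an explicit factorisation, whereas necessity must rule out every nuclear representation of $\Ft$ once \eqref{3.11} is violated, and the wavelet-level comparison has to be arranged so that no loss is introduced at the crucial thresholds.
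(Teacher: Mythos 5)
Your sufficiency argument is correct, and it takes a genuinely different route from the paper. The paper estimates the nuclear norm directly from the wavelet expansion \eqref{3.9}, bounding the products $2^{jn}\| \Ft \psi^j_{G,m}\,|\,H^{-s_1}_{p'}(\rn)\|\cdot\|\psi^j_{G,m}\,|\,H^{s_2}_p(\rn)\|$ via \eqref{2.26} and Hausdorff--Young and summing over $j,G,m$; this is self-contained modulo the wavelet machinery. You instead factor $\Ft$ through the continuous map $H^{s_1-\ve}_p(\rn)\to L_{p'}(\rn,w_{s_1-\ve})$ (Hausdorff--Young plus the lift) followed by a nuclear weighted identity embedding, and invoke the ideal property together with Proposition~\ref{emb-w-nuc}(ii). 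The bookkeeping checks out: the two conditions in \eqref{3.7} reduce exactly to $s_1>n$ and $s_2<-2n(1-\frac1p)$ for $1<p\le 2$, and the identification of $L_{p'}(\rn,w_{s_1-\ve})$ with $F^0_{p',2}(\rn,w_{s_1-\ve})$ is covered by \eqref{2.15}. This buys a shorter argument at the price of importing the full strength (the sufficiency half) of the weighted nuclearity characterisation, of which the paper's own proof only needs the necessity half.

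The necessity part, however, has a genuine gap: it is a plan rather than a proof, and the mechanism you propose would not work as stated. The Fourier transform is not diagonal in a wavelet basis, so ``transferring nuclearity to a matrix condition on the associated sequence spaces'' does not give you access to the sharp characterisation \eqref{3.7}. Two distinct devices are needed. For the threshold $s_2<-n$ (source exponent in $[2,\infty)$) one precomposes the assumed nuclear map $\Ft: W^k_{p'}(\rn)\to A^{s_2}_{p',q_2}(\rn)$ with the continuous map $\Fti: L_p(\rn,w_k)\to W^k_{p'}(\rn)$ (Hausdorff--Young applied to $\Dd^\alpha \Fti f$), so that $\Ft\circ\Fti=\id$ exhibits a nuclear identity embedding of type \eqref{3.6} to which Proposition~\ref{emb-w-nuc}(ii) applies; the condition on the other smoothness index then follows by duality. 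The harder threshold $s_1>2n/p$ for $2\le p<\infty$ (equivalently $s_2<-2n(1-\frac1p)$ for $1<p\le 2$) cannot be reached by this composition at all: here one factorises the multiplication operator $W_\alpha=\Fti\circ I_\alpha\circ\Ft$, so that nuclearity of $\Ft$ at the endpoint $s_1=2n/p$ would force a weighted identity embedding with smoothness gap exactly $n$ to be nuclear, contradicting the strict inequality in \eqref{3.7}. Without arguments of this kind your approach establishes at best the compactness thresholds $\tau^{n+}_p$, $\tau^{n-}_p$, not the strictly stronger conditions \eqref{3.11}.
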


\blue{
  \begin{remark}
Note that \eqref{3.11} can also be written as 
$ s_1 > n- \tau^{n+}_{p'}$ {and} $s_2<-n-\tau^{n-}_{p'} $ {with $\tau^{n+}_{p'}$ and $\tau^{n-}_{p'}$ as in \eqref{2.28} with \eqref{2.27}, replacing $p$ by $p'$ and} using $\frac1p+\frac{1}{p'}=1$. We return to this observation in Remark~\ref{R-thm-nuc} below. There one also finds some illustration of the corresponding parameter areas in Figure~\ref{fig-3}. {This discussion will be extended in Remark~\ref{R-last} to the limiting cases $p=1$ and $p=\infty$ where compactness and nuclearity coincide.}
\end{remark}}

\begin{proof}
{\em Step 1.} First we prove that \eqref{3.11} ensures that $\Ft$ in \eqref{3.10} is nuclear.
By elementary embeddings (monotonicity of the spaces $\As (\rn)$ with respect to $s$)
  and the ideal property of $\Nc$ it is sufficient to deal with
\begin{\eq}   \label{3.12}
\Ft: \quad H^{s_1}_p (\rn) \hra H^{s_2}_p (\rn), \qquad 1<p<\infty,
\end{\eq}
where $H^s_p (\rn)$ are the Sobolev spaces according to \eqref{2.12}, \eqref{2.13}, normed by
\begin{\eq}    \label{3.13}
\| f \, | H^s_p (\rn) \| = \| (w_s \wh{f})^\vee \, | L_p (\rn) \|, \qquad 1<p<\infty, \quad s\in \real,
\end{\eq}
with $w_s (x) = (1 + |x|^2 )^{s/2}$, $x\in \rn$. \\

{\em Step 2.} Let $1<p \le 2$. We rely on \eqref{3.9}. By \eqref{2.26} one has
\begin{\eq}   \label{3.14}
2^{jn} \| \psi^j_{G,m} \, | H^{s_2}_p (\rn) \| \sim 2^{j(s_2 +n - \frac{n}{p})}, \qquad j \in \no, \quad m\in \zn.
\end{\eq}
It follows from the duality \eqref{3.8} and \eqref{2.12} that $H^{s_1}_p (\rn)' = H^{-s_1}_{p'} (\rn)$. Then one obtains from
\eqref{3.13} and the Hausdorff--Young inequality \eqref{1.3} that
\begin{\eq}   \label{3.15}
\begin{aligned}
\| \Ft \psi^j_{G,m} \, | H^{-s_1}_{p'} (\rn) \| &\le c \, \| \Ft \big( w_{-s_1} \psi^j_{G,m} \big) \, | L_{p'} (\rn) \| \\
&\le c' \, \| w_{-s_1} \psi^j_{G,m} \, | L_p (\rn) \| \\
&\le c'' (1 + 2^{-j} |m| )^{-s_1} \, 2^{-j \frac{n}{p}},
\end{aligned}
\end{\eq}
$j \in \no$, $m\in \zn$. Then \eqref{3.9}, \eqref{3.14}, \eqref{3.15} applied to \eqref{3.1}, \eqref{3.2} show that
\begin{\eq}   \label{3.16}
\begin{aligned}
\| \Ft \, | \Nc \big(H^{s_1}_p (\rn), H^{s_2}_p (\rn) \big) \| & \le c \, \sum^\infty_{j=0} \sum_{m \in \zn} (1 + 2^{-j} |m|)^{-s_1}
2^{j(s_2 +n - \frac{2n}{p})} \\
&\le c' \sum^\infty_{j=0} 2^{j(s_2 +n- \frac{2n}{p})} \Big( \sum_{|m| \le 2^j} 1 + \sum^\infty_{k=1} 2^{-k s_1} 2^{(j+k)n} \Big)  \\
&\le c'' \, \sum^\infty_{j=0} 2^{js_2 + j 2n (1- \frac{1}{p})} \, \sum^\infty_{k=0} 2^{-k(s_1 -n)} <\infty
\end{aligned}
\end{\eq}
if both $s_1 >n$ and $s_2 < -\frac{2n}{p'}$. This proves that $\Ft$ is nuclear as claimed in \eqref{3.11} for $1<p \le 2$.\\

{\em Step 3.} Let $2 < p <\infty$. As recalled in Section~\ref{S3.1} the operator ideal $\Nc$ is symmetric. Then one obtains from the
above--mentioned duality for $H^s_p (\rn)$ and $\Ft= \Ft'$ that
\begin{\eq}   \label{3.17}
\Ft: \quad H^{s_1}_p (\rn) \hra H^{s_2}_p (\rn), \qquad 2 < p <\infty,
\end{\eq}
is nuclear if, and only if,
\begin{\eq}    \label{3.18}
\Ft: \quad H^{-s_2}_{p'} (\rn) \hra H^{-s_1}_{p'} (\rn), \qquad \frac{1}{p} + \frac{1}{p'} =1,
\end{\eq}
is nuclear. So it follows from Step 2 that $\Ft$ is nuclear as claimed in \eqref{3.11} for $2<p<\infty$.\\

{\em Step 4.} We prove in two steps that the conditions \eqref{3.11} are also necessary to ensure that $\Ft$ in \eqref{3.10} is nuclear.
Let $1<p \le 2$ and let
\begin{\eq}   \label{3.19}
\Ft: \quad A^{s_1}_{p', q_1} (\rn) \hra A^{s_2}_{p', q_2} (\rn), \qquad \frac{1}{p} + \frac{1}{p'} =1,
\end{\eq}
be nuclear. \blue{According to {\eqref{3.11}} with $1<p\leq 2$ replaced by $2\leq p'<\infty$ we wish to prove that $s_2<-n$ and $s_1>2n(1-\frac1p)$.
} By the ideal property of $\Nc$ it is sufficient for the proof of $s_2 <-n$ to deal with
\begin{\eq}   \label{3.20}
\Ft: \quad W^k_{p'} (\rn) \hra A^{s_2}_{p', q_2} (\rn), \qquad 2\le p' <\infty, \quad k\in \nat,
\end{\eq}
where $W^k_{p'} (\rn)$ are the classical Sobolev spaces. 
Let $f\in L_p (\rn, w_k)$ according to \eqref{2.14}. Then it follows from the Hausdorff--Young inequality \eqref{1.3} and
\begin{\eq}   \label{3.21}
\Dd^\alpha \Fti f (x) = i^{|\alpha|} \, \Fti \big(\xi^\alpha f (\xi)\big) (x) \in L_{p'} (\rn), \qquad |\alpha| \le k,
\end{\eq}
that $\Fti: \, L_p (\rn, w_k) \hra W^k_{p'} (\rn)$. Thus $\Ft \Fti = \id$ combined with \eqref{3.19}, specified by \eqref{3.20}, shows that
\begin{\eq}   \label{3.22}
\id: \quad L_p (\rn, w_k) \hra A^{s_2}_{p', q_2} (\rn)
\end{\eq}
is nuclear. But now $s_2<-n$ is an immediate consequence of Proposition~\ref{emb-w-nuc}(ii) applied to \eqref{3.22}.

%\open{I would prefer this argument compared with the original one, but in the file this modification can easily be cancelled to return to the original text.}

\ignore{Now one obtains from the well--known restriction and extension properties of the above function spaces and
again the ideal property of $\Nc$ that
\begin{\eq}  \label{3.23}
\id: \quad L_p (U) \hra A^{s_2}_{p', q_2} (U), \qquad U = \{x\in \rn: \, |x| <1 \},
\end{\eq}
is also nuclear. Then $s_2 <-n$ in \eqref{3.11} for $2\le p <\infty$ (now changing the roles of $p$ and $p'$)
follows from \eqref{3.4}, \eqref{3.5}.}

The justification  of
$s_1 >n$ for $1<p \le 2$ in \eqref{3.11} is now a matter of duality similarly as in Step 3.\\

{\em Step 5.} We deal with the remaining cases. { First we prove that $\Ft$ in \eqref{3.10} cannot be a nuclear mapping if $2\le p <\infty$ and $s_1 \le 2n/p$.  It should be clear that it is sufficient to prove it for $s_1=2n/p$. }  Theorem~\ref{Thm-comp} in Section~\ref{S2.3} implies that 
\begin{\eq}   \label{3.24}
	\Ft: \quad B^{\frac{2n}{p}}_{p,q} (\rn) \hra {B^s_{p,p}} (\rn), \qquad 2\le p<\infty, \quad 1<q<\infty,
\end{\eq}
with
\begin{\eq}  \label{3.25}
	s \le d^n_p = 2n \big( \frac{1}{p} - \frac{1}{2} \big) = \frac{2n}{p} - n,
\end{\eq}
is continuous {where we benefit from the fact that $p<\infty$  and we can thus choose $t$ with $0<t<\frac{2n}{p}$ and apply elementary embeddings such that $B^{\frac{2n}{p}}_{p,q} (\rn) \hra B^t_{p,p}(\rn)\hra {B^s_{p,p}} (\rn)$ for arbitrary $q$.
}

%\open{Is this true, or better: covered by Theorem~\ref{Thm-comp} for all $q$ in the limiting case $s=d^n_p$? At least I do not see it immediately \dots}

 The ideal
property of $\Nc$ and the elementary embeddings {$B^{\frac{2n}{p}}_{p,q_0}(\rn)\hra A^{{\frac{2n}{p}}}_{p,q_1}(\rn)$ with $q_0=\min(p,q_1)$, and $A^{s_2}_{p,q_2}(\rn)\hra B^s_{p,p}(\rn)$ with $s<s_2$} 
show that nuclearity of  the operator \eqref{3.10} implies the nuclearity the operator \eqref{3.24}. So it is sufficient  to prove that the continuous mapping 
\eqref{3.24}, \eqref{3.25} is not nuclear. 
Let $I_\alpha$ be the lift according to \eqref{2.10}, \eqref{2.11} with $w_\alpha (x) =
(1+|x|^2)^{\alpha/2}$ as in \eqref{2.9}. Let $W_\alpha$, $W_\alpha f = w_\alpha f$, be the related multiplication operator. Based on
what we already know we factorise
\begin{\eq}   \label{3.26}
	W_\alpha : \quad B^{\frac{2n}{p}}_{p,q} (\rn) \hra B^{d^n_p}_{p,p} (\rn)
\end{\eq}
by the continuous mappings 
\begin{\eq}   \label{3.27}
	B^{\frac{2n}{p}}_{p,q} (\rn) \os{\Ft}{\hra} {B^s_{p,p}} (\rn) \os{I_\alpha}{\hra} B^\sigma_{p,p} (\rn) \os{\Fti}{\hra} B^{d^n_p}_{p,p} (\rn),
\end{\eq}
where $\sigma >0$ and $\alpha = s -\sigma$.
If we assume, in addition, that $\Ft$ in \eqref{3.24} is nuclear, then $W_\alpha$ is also nuclear. Under this assumption it follows from
$\id = W_\alpha \circ W_{-\alpha}$ and the isomorphism \eqref{2.15} that
\begin{\eq}   \label{3.28}
	\id: \quad B^{\frac{2n}{p}}_{p,q} (\rn, w_{-\alpha}) \hra B^{d^n_p}_{p,p} (\rn)
\end{\eq}
is also nuclear, where $-\alpha = \sigma -s >0$. Furthermore one has by \eqref{3.25} that
$\frac{2n}{p} - d^n_p =n$. But this contradicts \eqref{3.7}. This shows that $s_1 > 2n/p$ is necessary to ensure that $\Ft$ in 
\eqref{3.10} is nuclear if $2\le p <\infty$. The corresponding assertion for $1<p \le 2$ is again a matter of duality based on
\eqref{3.8}, similarly as in \eqref{3.17}, \eqref{3.18}.
\ignore{
	{\em Step 5.} We deal with the remaining cases. It follows from Theorem~\ref{Thm-comp} in Section~\ref{S2.3} that
\begin{\eq}   \label{3.24}
\Ft: \quad B^{\frac{2n}{p}}_{p,q} (\rn) \hra {B^s_{p,p}} (\rn), \qquad 2\le p<\infty, \quad 1<q<\infty,
\end{\eq}
with
\begin{\eq}  \label{3.25}
s \le d^n_p = 2n \big( \frac{1}{p} - \frac{1}{2} \big) = \frac{2n}{p} - n,
\end{\eq}
is continuous {where we benefit from the fact that $p<\infty$  and we can thus choose $\sigma$ with $0<\sigma<\frac{2n}{p}$ and apply elementary embeddings such that $B^{\frac{2n}{p}}_{p,q} (\rn) \hra B^\sigma_{p,p}(\rn)\hra {B^s_{p,p}} (\rn)$ for arbitrary $q$.
}

%\open{Is this true, or better: covered by Theorem~\ref{Thm-comp} for all $q$ in the limiting case $s=d^n_p$? At least I do not see it immediately \dots}

We wish to prove that $\Ft$ in \eqref{3.10} cannot be a nuclear mapping if $2\le p <\infty$ and $s_1 \le 2n/p$. The ideal
property of $\Nc$ and the elementary embeddings {$B^{\frac{2n}{p}}_{p,q_0}(\rn)\hra A^{s_1}_{p,q_1}(\rn)$ with $q_0=\min(p,q_1)$, and $A^{s_2}_{p,q_2}(\rn)\hra B^s_{p,p}(\rn)$ with $s<s_2$} 
show that it is sufficient for this purpose to prove that the continuous mapping 
\eqref{3.24}, \eqref{3.25} is not nuclear. Let $I_\alpha$ be the lift according to \eqref{2.10}, \eqref{2.11} with $w_\alpha (x) =
(1+|x|^2)^{\alpha/2}$ as in \eqref{2.9}. Let $W_\alpha$, $W_\alpha f = w_\alpha f$, be the related multiplication operator. Based on
what we already know we factorise
\begin{\eq}   \label{3.26}
W_\alpha : \quad B^{\frac{2n}{p}}_{p,q} (\rn) \hra B^{d^n_p}_{p,p} (\rn)
\end{\eq}
by the continuous mappings 
\begin{\eq}   \label{3.27}
B^{\frac{2n}{p}}_{p,q} (\rn) \os{\Ft}{\hra} {B^s_{p,p}} (\rn) \os{I_\alpha}{\hra} B^\sigma_{p,p} (\rn) \os{\Fti}{\hra} B^{d^n_p}_{p,p} (\rn),
\end{\eq}
where $\sigma >0$ and $\alpha = s -\sigma$.
If we assume, in addition, that $\Ft$ in \eqref{3.24} is nuclear, then $W_\alpha$ is also nuclear. Under this assumption it follows from
$\id = W_\alpha \circ W_{-\alpha}$ and the isomorphism \eqref{2.15} that
\begin{\eq}   \label{3.28}
\id: \quad B^{\frac{2n}{p}}_{p,q} (\rn, w_{-\alpha}) \hra B^{d^n_p}_{p,p} (\rn)
\end{\eq}
is also nuclear, where $-\alpha = \sigma -s >0$. Furthermore one has by \eqref{3.25} that
$\frac{2n}{p} - d^n_p =n$. But this contradicts \eqref{3.7}. This shows that $s_1 > 2n/p$ is necessary to ensure that $\Ft$ in 
\eqref{3.10} is nuclear if $2\le p <\infty$. The corresponding assertion for $1<p \le 2$ is again a matter of duality based on
\eqref{3.8}, similarly as in \eqref{3.17}, \eqref{3.18}.
}
\end{proof}

\noindent\begin{minipage}{0.5\textwidth}
  \begin{remark}\label{R-thm-nuc}
    In the figure aside we sketched in the usual $(\frac1p,s)$-diagram the parameter areas where the Fourier operator $\Ft$ is nuclear -- as a proper subdomain of the compactness area, recall Figure~\ref{fig-1}. Note that, using the notation \eqref{2.28} with \eqref{2.27}, one could rewrite the condition \eqref{3.11} for the nuclearity of $\Ft$ in \eqref{3.10} as well as for the compactness in \eqref{2.33} as: $\Ft$ is compact, if 
    \[
      s_1> \tau^{n+}_p\quad\text{and}\quad s_2<\tau^{n-}_p
    \]
    and $\Ft$ is nuclear, if, and only if, 
    \[
       s_1 > n- \tau^{n+}_{p'}\quad\text{and}\quad s_2<-n-\tau^{n-}_{p'} .
      \]
      This explains somehow the reflected and shifted `nuclear' parameter areas compared with the compactness areas, see also Figure~\ref{fig-2}.

    \end{remark}\vfill~
\end{minipage}\hfill\begin{minipage}{0.45\textwidth}
  ~\hfill\begin{picture}(0,0)%
\includegraphics{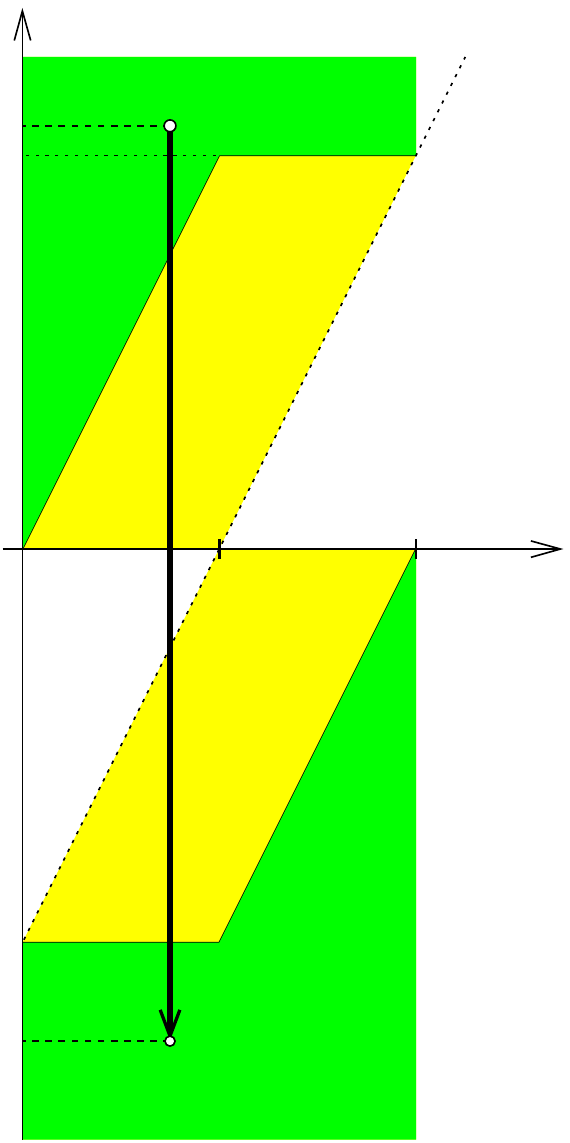}%
\end{picture}%
\setlength{\unitlength}{4144sp}%
\begingroup\makeatletter\ifx\SetFigFont\undefined%
\gdef\SetFigFont#1#2#3#4#5{%
  \reset@font\fontsize{#1}{#2pt}%
  \fontfamily{#3}\fontseries{#4}\fontshape{#5}%
  \selectfont}%
\fi\endgroup%
\begin{picture}(2589,5220)(349,-5023)
\put(496,-1051){\makebox(0,0)[lb]{\smash{{\SetFigFont{10}{12.0}{\familydefault}{\mddefault}{\updefault}{\color[rgb]{0,0,0}nuclear}%
}}}}
\put(1351,-2761){\makebox(0,0)[lb]{\smash{{\SetFigFont{10}{12.0}{\familydefault}{\mddefault}{\updefault}{\color[rgb]{0,0,0}compact}%
}}}}
\put(1351,-781){\makebox(0,0)[lb]{\smash{{\SetFigFont{10}{12.0}{\familydefault}{\mddefault}{\updefault}{\color[rgb]{0,0,0}compact}%
}}}}
\put(1576,-3886){\makebox(0,0)[lb]{\smash{{\SetFigFont{10}{12.0}{\familydefault}{\mddefault}{\updefault}{\color[rgb]{0,0,0}nuclear}%
}}}}
\put(2386,-376){\makebox(0,0)[lb]{\smash{{\SetFigFont{10}{12.0}{\familydefault}{\mddefault}{\updefault}{\color[rgb]{0,0,0}$s=d^n_p$}%
}}}}
\put(1171,-1501){\makebox(0,0)[lb]{\smash{{\SetFigFont{10}{12.0}{\familydefault}{\mddefault}{\updefault}{\color[rgb]{0,0,0}$\Ft$}%
}}}}
\put(1351,-2131){\makebox(0,0)[b]{\smash{{\SetFigFont{10}{12.0}{\familydefault}{\mddefault}{\updefault}$\frac12$}}}}
\put(1126,-286){\makebox(0,0)[b]{\smash{{\SetFigFont{10}{12.0}{\familydefault}{\mddefault}{\updefault}$A^{s_1}_{p,q_1}$}}}}
\put(1126,-4741){\makebox(0,0)[b]{\smash{{\SetFigFont{10}{12.0}{\familydefault}{\mddefault}{\updefault}$A^{s_2}_{p,q_2}$}}}}
\put(2251,-2131){\makebox(0,0)[b]{\smash{{\SetFigFont{10}{12.0}{\familydefault}{\mddefault}{\updefault}$1$}}}}
\put(2701,-2491){\makebox(0,0)[lb]{\smash{{\SetFigFont{10}{12.0}{\familydefault}{\mddefault}{\updefault}$\frac1p$}}}}
\put(1081,-2491){\makebox(0,0)[rb]{\smash{{\SetFigFont{10}{12.0}{\familydefault}{\mddefault}{\updefault}$\frac1p$}}}}
\put(406,-556){\makebox(0,0)[rb]{\smash{{\SetFigFont{10}{12.0}{\familydefault}{\mddefault}{\updefault}$n$}}}}
\put(406,-376){\makebox(0,0)[rb]{\smash{{\SetFigFont{10}{12.0}{\familydefault}{\mddefault}{\updefault}$s_1$}}}}
\put(406, 74){\makebox(0,0)[rb]{\smash{{\SetFigFont{10}{12.0}{\familydefault}{\mddefault}{\updefault}$s$}}}}
\put(406,-4606){\makebox(0,0)[rb]{\smash{{\SetFigFont{10}{12.0}{\familydefault}{\mddefault}{\updefault}$s_2$}}}}
\put(406,-4156){\makebox(0,0)[rb]{\smash{{\SetFigFont{10}{12.0}{\familydefault}{\mddefault}{\updefault}$-n$}}}}
\end{picture}%
\hfill~\\
~\hspace*{\fill}\unterbild{fig-3}
\end{minipage}
\smallskip~

\begin{remark}   \label{R3.2}
Let us briefly mention that the method from Step 5 of the proof of Theorem~\ref{T3.1}
to ensure $s_1 > 2n/p$ for $2\le p <\infty$ can also be used if $1<p \le 2$. The
counterpart of \eqref{3.24}, \eqref{3.25} is now
\begin{\eq}   \label{3.29}
\Ft: \quad {B^s_{p,p}} (\rn) \hra B^{2n(\frac{1}{p} - 1)}_{p,q} (\rn), \qquad 1<p\le 2, \quad 1<q<\infty,
\end{\eq}
with
\begin{\eq}    \label{3.30}
s \ge d^n_p = 2n \big( \frac{1}{p} - \frac{1}{2} \big) = 2n \big( \frac{1}{p}- 1 \big) +n,
\end{\eq}
again covered by Theorem~\ref{Thm-comp} in Section~\ref{S2.3}. Instead of \eqref{3.26}, \eqref{3.27} one relies now on the factorisation of
\begin{\eq}   \label{3.31}
W_\alpha : \quad B^{d^n_p}_{p,p} (\rn) \hra B^{d^n_p -n}_{p,p} (\rn)
\end{\eq}
by the continuous mappings 
\begin{\eq}   \label{3.32}
B^{d^n_p}_{p,p} (\rn) \os{\Fti}{\hra} B^\sigma_{p,q} (\rn) \os{I_\alpha}{\hra} \Bs (\rn) \os{\Ft}{\hra} B^{d^n_p -n}_{p,q} (\rn)
\end{\eq}
where $\sigma <0$ and $\alpha = \sigma -s$ with $s>d^n_p$. Afterwards one can argue as at the end of Step 5. This shows directly that
$s_2 < 2n \big( \frac{1}{p} - 1 \big)$ is necessary to  ensure that $\Ft$ in \eqref{3.10} is nuclear if $1<p \le 2$.
\end{remark}

%\subsection{Complements}    \label{S3.3}
\subsection{Limiting cases}    \label{S3.3}
So far we excluded the values $1$ and $\infty$ for the parameters $p, q_1, q_2$ in Theorem~\ref{T3.1}. \blue{We now collect what can be said about these limiting cases.}

\begin{proposition}\label{ext-q-F}
Let 
$1<p<\infty$, $1\leq q_1,q_2\leq \infty$,  and let $s_1 \in \real$, $s_2 \in \real$. Then
\begin{\eq}    \label{3.10a}
\Ft: \quad F^{s_1}_{p, q_1} (\rn) \hra F^{s_2}_{p, q_2} (\rn)
\end{\eq}
is nuclear if, and only if, \eqref{3.11} is satisfied.
\end{proposition}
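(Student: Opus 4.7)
The plan is to reduce Proposition \ref{ext-q-F} to Theorem \ref{T3.1}, exploiting the fact that the conditions \eqref{3.11} depend only on $s_1, s_2, p$ and are insensitive to the fine indices $q_1, q_2$. The bridge between the $F$- and $B$-scales is provided by the embedding \eqref{2.8}, together with elementary monotonicity in $s$, which allows the transfer of nuclearity from one scale to the other.

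For the sufficiency direction, given \eqref{3.11} one would choose $\varepsilon>0$ small enough so that the same inequalities still hold with $s_1-\varepsilon$ in place of $s_1$ and $s_2+\varepsilon$ in place of $s_2$. From \eqref{2.8} and elementary embeddings we then obtain
\[
F^{s_1}_{p,q_1}(\rn) \hra B^{s_1-\varepsilon}_{p,2}(\rn), \qquad B^{s_2+\varepsilon}_{p,2}(\rn) \hra F^{s_2}_{p,q_2}(\rn),
\]
and Theorem \ref{T3.1}, applied with the admissible fine indices $q_1=q_2=2\in(1,\infty)$, furnishes the nuclearity of $\Ft:B^{s_1-\varepsilon}_{p,2}\to B^{s_2+\varepsilon}_{p,2}$. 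The ideal property of $\Nc$ then yields nuclearity of the composition.

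For the necessity direction, I would show that the arguments of Step~4 and Step~5 of the proof of Theorem~\ref{T3.1}, together with the symmetric variant in Remark~\ref{R3.2}, go through for the $F$-scale and for $q_1,q_2\in\{1,\infty\}$ without any change of substance, since they rely only on elementary embeddings, Hausdorff--Young \eqref{1.3}, the continuity assertions of Theorem~\ref{Thm-comp}, the lifting \eqref{2.11}, and the strict inequalities in Proposition~\ref{emb-w-nuc}(ii). Concretely, for $2\le p<\infty$, Step~4's method---using $W^k_p=F^k_{p,2}\hra F^{s_1}_{p,q_1}$ for $k>s_1$, the Hausdorff--Young estimate $\Fti:L_{p'}(\rn,w_k)\hra W^k_p$ from \eqref{1.3}, and Proposition~\ref{emb-w-nuc}(ii)---would yield the strict inequality $s_2<-n$; while Step~5's factorization of $W_\alpha$ through $\Ft$, $I_\alpha$, $\Fti$ (with the continuity of each factor coming from Theorem~\ref{Thm-comp} after the sandwich $B^{s_1}_{p,1}\hra F^{s_1}_{p,q_1}$ and $F^{s_2}_{p,q_2}\hra B^{s_2-\eta}_{p,p}$ reduces everything to the $B$-setting already treated) would yield $s_1>2n/p$. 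The analogous conditions for $1<p\le 2$ then follow either by duality of the operator ideal $\Nc$ or by the symmetric factorization outlined in Remark~\ref{R3.2}.

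The main difficulty is securing \emph{strict} inequalities at the limiting fine indices $q_1,q_2\in\{1,\infty\}$: a naive sandwich-and-limit strategy (deriving nuclearity of $\Ft:B^{s_1+\eta}_{p,2}\to B^{s_2-\eta}_{p,2}$ from the hypothesis and then sending $\eta\to 0$) would only produce non-strict versions of \eqref{3.11}. This obstacle is circumvented by the direct adaptation of the factorization arguments described above, whose contradictions stem from the strict inequality \eqref{3.7} in Proposition~\ref{emb-w-nuc}(ii), which is invariant under the choice of fine indices and therefore persists through the limiting values $q_1,q_2\in\{1,\infty\}$.
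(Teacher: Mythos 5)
Your sufficiency argument and your treatment of the easy necessity cases ($q_1=\infty$ or $q_2=1$ by elementary embeddings; $s_2<-n$ for $2\le p<\infty$ and $q_2=\infty$ by the Step-4 mechanism through $W^k_{p'}$, Hausdorff--Young and Proposition~\ref{emb-w-nuc}(ii); the remaining conditions by the symmetry of $\Nc$ and the duality \eqref{3.8}) coincide in substance with the paper's proof. The gap lies in the two genuinely hard cases, namely $2<p<\infty$ with $q_1=1$ (where one must show $s_1>\frac{2n}{p}$) and, dually, $1<p<2$ with $q_2=\infty$. You propose to settle these by sandwiching $B^{s_1}_{p,1}(\rn)\hra F^{s_1}_{p,1}(\rn)$ and letting the Step-5 factorisation ``go through without any change of substance''. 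But this reduces the problem to the non-nuclearity of $\Ft:B^{2n/p}_{p,1}(\rn)\hra B^{s}_{p,p}(\rn)$, which is exactly the borderline $B$-situation with fine index $1$ that the paper cannot resolve: Corollary~\ref{ext-q-B} obtains only the non-strict inequality $s_1\ge\frac{2n}{p}$ there. Your factorisation terminates in an application of Proposition~\ref{emb-w-nuc}(ii) to $\id:B^{2n/p}_{p,1}(\rn,w_{-\alpha})\hra B^{d^n_p}_{p,p}(\rn)$, sitting precisely on the boundary $\frac{s_1-s_2}{n}=1$ of \eqref{3.7}, with $p_1=p_2$ and the extreme fine index $q_1=1$ --- the configuration in which nuclear embeddings are most delicate and which Step~5 deliberately avoids by assuming $1<q<\infty$. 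The assertion that the contradiction ``persists through the limiting values $q_1,q_2\in\{1,\infty\}$'' is exactly the claim requiring proof, and it is not supplied; were it routine, the exceptional cases in Corollary~\ref{ext-q-B} would be unnecessary.

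The paper's proof uses an ingredient absent from your proposal and essential at this point: the Jawerth--Franke (Sobolev-type) embedding $H^{s_3}_{r}(\rn)\hra F^{s_1}_{p,1}(\rn)$ with $2<r<p$ and $s_3-\frac{n}{r}=s_1-\frac{n}{p}$, which exists only on the $F$-scale (there is no counterpart with target $B^{s_1}_{p,1}(\rn)$). This changes the integrability parameter \emph{before} Hausdorff--Young is applied, so that the resulting weighted identity $\id:L_{r'}(\rn,w_{s_3})\hra F^{s_2}_{p,q_2}(\rn)$ is tested against Proposition~\ref{emb-w-nuc}(ii) away from the degenerate diagonal $p_1=p_2$ and away from the boundary of \eqref{3.7}; the strict inequality $s_3>\frac{n}{r}+\frac{n}{p}$, hence $s_1>\frac{2n}{p}$, then drops out. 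This is precisely why the characterisation is complete for the $F$-scale but remains open for the $B$-scale in these two cases, so any correct proof of Proposition~\ref{ext-q-F} must exploit the $F$-structure here; your proposal does not.
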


\begin{proof}
{\em Step 1}.~ The sufficiency of the  assumptions \eqref{3.11} for $q_1=1$ and $q_2=\infty$  follows immediately by Theorem~\ref{T3.1} together with the elementary embeddings for the spaces $F^s_{p,q}(\rn)$. Now assume $q_1=\infty$. In case of $1< p\le 2$ we can take $\tilde{s_1}$ such that $s_1>\tilde{s_1}>n$. Then for any $\tilde{q_1}$, $1<\tilde{q_1}<\infty$, we have 
\[  F^{s_1}_{p, \infty} (\rn) \hra F^{\tilde{s_1}}_{p, \tilde{q_1}} (\rn) \stackrel{\Ft}{\hra} A^{s_2}_{p, q_2} (\rn)  , \] 
where Theorem~\ref{T3.1} ensures the nuclearity of the latter {mapping}, and thus also of \eqref{3.10a} with $q_1=\infty$. A similar argument works for $p>2$ as well as for $q_2=1$, where we always benefit from the strict inequalities in \eqref{3.11}.

{\em Step 2}.~
We prove the necessity of the condition \eqref{3.11}. If $q_1=\infty$ or $q_2=1$, then the necessity of the conditions follows once more by the elementary embeddings. {Moreover, it is sufficient to prove the necessity of the assumption concerning $s_2$ if $q_2=\infty$, and concerning $s_1$ if $q_1=1$, since the necessity of the assumption concerning the second smoothness index, in both cases,  follows once more  by elementary embeddings. }  

  If $2\le  p <\infty$ and $q_2=\infty$, then the argument that was used in Step 4 of the proof of Theorem~\ref{T3.1} shows the necessity of $s_2<-n$ in this case. \ignore{\cyan{Moreover the necessity $s_1>n$ follows from Theorem~\ref{T3.1} since the nuclearity of \eqref{3.10a} implies the nuclearity of 
\begin{\eq}    \label{3.10aa}
	\Ft: \quad F^{s_1}_{p, q_1} (\rn) \hra F^{s_3}_{p, q_3} (\rn), \qquad s_3< s_2<-n\quad \text{and} \quad 1<q_3<\infty . 
\end{\eq}
}}
If $q_2<\infty$ and $1<p\le 2$, and the mapping 
\[ \Ft: \quad F^{s_1}_{p, 1} (\rn) \hra F^{s_2}_{p, q_2} (\rn) \]
is nuclear, then,  by duality,  the mapping
\[ \Ft: \quad F^{-s_2}_{p', q'_2} (\rn) \hra F^{-s_1}_{p', \infty} (\rn) \]
is nuclear. So $-s_1<-n$ is a consequence of the last argument. 

\ignore{\cyan{ To prove that we have $s_2< -n$ it is sufficient to consider the mapping
\begin{\eq}    \label{3.10aaa}
	\Ft: \quad F^{s_3}_{p, q_3} (\rn) \hra F^{s_2}_{p, q_2} (\rn), \qquad s_3> s_2> n\quad \text{and} \quad 1<q_3<\infty . 
\end{\eq}
that is nuclear if \eqref{3.10a} is nuclear. } }

Now we  consider the case   $2<p<\infty$ and  $q_1=1$. We choose $r$ with $2<r<p$  and $s_3$ such that 
\begin{equation}\label{LS0}
	s_3=s_1+\frac{n}{r} -\frac{n}{p} . 
\end{equation} 
Then  by the Sobolev type embedding  
\begin{align}
	H^{s_3}_{r}(\rn)\hra F^{s_1}_{p,1}(\rn)\stackrel{\Ft}{\hra} F^{s_2}_{p, q_2} (\rn)  
\end{align}	
%	\intertext{so the embeding}
this implies that	%\qquad\text{so}\qquad	
\begin{equation}\label{LS1}
	\Ft:\quad H^{s_3}_r(\rn)\hra F^{s_2}_{p,q_2}(\rn)
\end{equation}
 is nuclear.  
It follows from the Hausdorff--Young inequality that
\begin{\eq}   \label{LS2}
	\begin{aligned}
		\| \Ft f\, | H^{s_3}_{r} (\rn) \| \le c \, \| \Ft \big( w_{s_3} f\big) \, | L_{r} (\rn) \| %\\ &
		\le c' \, \| w_{s_3} f \, | L_{r'} (\rn) \| .
		%\sim  \| f \, | L_{p'} (\rn, w_{0,\beta}) \| %\\
		%		&\le c'' (1 + 2^{-j} |m| )^{-s_1} \, 2^{-j \frac{n}{p}},
	\end{aligned}
\end{\eq}
Now $\Ft \Fti = \id$ combined with \eqref{LS1} %and \eqref{LS2}, 
shows that
\begin{\eq}   \label{LS3}
	\id: \quad L_{r'} (\rn, w_{s_3}) \hra F^{s_2}_{p, q_2} (\rn)
\end{\eq}
is nuclear if \eqref{LS1} was nuclear. Thus Proposition~\ref{emb-w-nuc}(ii) implies 
\begin{\eq}\label{LS4}
	s_3>\frac{n}{r}+\frac{n}{p}, %\qquad \text{and} \qquad s_2<-n
\end{\eq}
which leads to  
\begin{\eq}\label{LS5}
	s_1= s_3-\frac{n}{r}+\frac{n}{p}>\frac{n}{r}+\frac{n}{p}-\frac{n}{r}+\frac{n}{p}= 
	\frac{2n}{p}. %\qquad \text{and} \qquad s_2<-n
\end{\eq}
\ignore{\cyan{The necessity of $s_2<-n$ can be proved in the same way as in  \eqref{3.10aaa}. }}
%This proves the necessity of the conditions \eqref{2.3} for $2\le p<\infty$. 
 
Analogously we can prove the necessity in the case  $1<p<2$ and $q_2=\infty$. 
\ignore{\cyan{The necessity of $s_1>n$ can be proved in the same way as in  \eqref{3.10aa}.}}
We  choose $r$ such that $p<r<2$ and $s_3$  given by  
	\begin{equation}\label{LS5a}
	 s_3=s_2+\frac{n}{r} -\frac{n}{p} . 
\end{equation} 
Now  the Sobolev  embeddings implies   
\begin{align}
 F^{s_1}_{p,q_1}(\rn)\stackrel{\Ft}{\hra} F^{s_2}_{p, \infty} (\rn) \hra 	H^{s_3}_{r}(\rn),
\end{align}	
%	\intertext{so the embeding}
which implies that	%\qquad\text{so}\qquad	
\begin{equation}\label{LS6}
	\Ft:\quad F^{s_1}_{p,q_1}(\rn) \hra H^{s_3}_r(\rn)
\end{equation}
is nuclear.  Thus in the same way as above,  %\eqref{LS2}, 
{the inequalities
\begin{\eq}   \label{LS2a}
	\begin{aligned}
	\| \Ft f \, | L_{r'} (\rn, w_{s_3}  ) \|  \le c \, \| \Ft^{-1} \big( w_{s_3} \Ft f\big) \, | L_{r} (\rn) \| %\\ &
		= c \, 	\| f\, | H^{s_3}_{r} (\rn) \|,
		%\sim  \| f \, | L_{p'} (\rn, w_{0,\beta}) \| %\\
		%		&\le c'' (1 + 2^{-j} |m| )^{-s_1} \, 2^{-j \frac{n}{p}},
	\end{aligned}
\end{\eq}} 
combined with   
$\Ft \Fti = \id$ and \eqref{LS1} %and \eqref{LS2}, 
lead to the nuclearity of 
\begin{\eq}   \label{LS7}
	\id: \quad F^{s_2}_{p, q_2} (\rn) \hra  L_{r'} (\rn, w_{s_3}).
\end{\eq}
%is nuclear if \eqref{LS1} is nuclear. 
But the nuclearity of \eqref{LS7} is equivalent to the nuclearity of 
\begin{\eq}   \label{LS8}
	\id: \quad F^{s_2}_{p, q_2} (\rn, w_{-s_3}) \hra  L_{r'} (\rn),
\end{\eq}
and another application of Proposition~\ref{emb-w-nuc}(ii) implies 
\begin{\eq}\label{HT1}
	-s_3>n +\frac{n}{r'}-\frac{n}{p}. %\qquad \text{and} \qquad s_2<-n
\end{\eq}
Consequently,  
\begin{\eq}\label{HT2}
	s_2< - 2n\left(1-\frac{1}{p} \right), %s_3-\frac{n}{r}+\frac{n}{p}>\frac{n}{r}+\frac{n}{p}-\frac{n}{r}+\frac{n}{p}= 
%	\frac{2n}{p} %\qquad \text{and} \qquad s_2<-n
\end{\eq}
which concludes the proof of the necessity of the conditions \eqref{3.11} in all cases.
\end{proof}

\begin{corollary}\label{ext-q-B}
  Let 
$1<p<\infty$, $1\leq q_1,q_2\leq \infty$  and let $s_1 \in \real$, $s_2 \in \real$. Then
\begin{\eq}    \label{3.10b}
\Ft: \quad B^{s_1}_{p, q_1} (\rn) \hra B^{s_2}_{p, q_2} (\rn)
\end{\eq}
is nuclear if \eqref{3.11} is satisfied. \\
Conversely, the nuclearity of \eqref{3.10b} implies \eqref{3.11} in all cases apart from $2<p<\infty$ and $q_1=1$, or $1<p<2$ and $q_2=\infty$. In case of $2<p<\infty$ and $q_1=1$ the nuclearity of \eqref{3.10b} implies $s_1\ge\frac{2n}{p}$, while in case of $1<p<2$ and $q_2=\infty$ the nuclearity of \eqref{3.10b} implies $s_2\le -2n(1-\frac{1}{p})$.
\end{corollary}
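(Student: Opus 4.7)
The sufficiency is a routine sandwich argument. Given that \eqref{3.11} holds with strict inequalities, I pick $\varepsilon>0$ so small that $s_1-\varepsilon$ and $s_2+\varepsilon$ still satisfy \eqref{3.11}, fix any $q\in(1,\infty)$, and combine the elementary embeddings $B^{s_1}_{p,q_1}(\rn)\hra B^{s_1-\varepsilon}_{p,q}(\rn)$ and $B^{s_2+\varepsilon}_{p,q}(\rn)\hra B^{s_2}_{p,q_2}(\rn)$ with the nuclearity of the middle arrow $\Ft:B^{s_1-\varepsilon}_{p,q}(\rn)\hra B^{s_2+\varepsilon}_{p,q}(\rn)$ supplied by Theorem~\ref{T3.1}; the ideal property of $\Nc$ then yields the nuclearity of \eqref{3.10b}.

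For necessity I split \eqref{3.11} into two halves. The \emph{easy} halves, namely $s_2<-n$ when $2\le p<\infty$ and $s_1>n$ when $1<p\le 2$, hold for arbitrary $q_1,q_2\in[1,\infty]$. The first is Step~4 of the proof of Theorem~\ref{T3.1} repeated verbatim: $W^k_p(\rn)\hra B^{s_1}_{p,q_1}(\rn)$ requires only $k>s_1$ (any $q_1$), Hausdorff--Young supplies $\Fti:L_{p'}(\rn,w_k)\to W^k_p(\rn)$, and $\Ft\circ\Fti=\id$ combined with the assumed nuclearity makes $\id:L_{p'}(\rn,w_k)\to B^{s_2}_{p,q_2}(\rn)$ nuclear, whence Proposition~\ref{emb-w-nuc}(ii) forces $s_2<-n$. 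The second I obtain by transposing this factorisation: for $1<p\le 2$ and $k$ large enough that $B^{s_2}_{p,q_2}(\rn)\hra W^{-k}_p(\rn)$, the transpose of Hausdorff--Young gives a continuous $\Fti:W^{-k}_p(\rn)\to L_{p'}(\rn,w_{-k})$, and the composition $\Fti\circ(\mathrm{incl})\circ\Ft$ coincides with the natural inclusion $B^{s_1}_{p,q_1}(\rn)\subset L_{p'}(\rn,w_{-k})$ (both sides embed continuously into $\SpRn$ and agree on $\SRn$). This inclusion is therefore nuclear, and a second application of Proposition~\ref{emb-w-nuc}(ii) gives $s_1>n$.

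The \emph{hard} halves, $s_1>2n/p$ for $2<p<\infty$ and $s_2<-2n(1-1/p)$ for $1<p<2$, follow the pattern of Step~5 and Remark~\ref{R3.2}, and this is where the two exceptional cases enter. Assuming $s_1\le 2n/p$ with $2<p<\infty$, I compose
\[
B^{2n/p}_{p,q_0}(\rn) \hra B^{s_1}_{p,q_1}(\rn) \stackrel{\Ft}{\to} B^{s_2}_{p,q_2}(\rn) \hra B^{s}_{p,p}(\rn)
\]
to contradict the non-nuclearity of $\Ft:B^{2n/p}_{p,q_0}(\rn)\hra B^{s}_{p,p}(\rn)$ established in Step~5 of Theorem~\ref{T3.1}, which is applicable only for $q_0\in(1,\infty)$ and $s\le d^n_p$. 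The target embedding with some $s<s_2$, $s\le d^n_p$, is always available since the easy half has already given $s_2<-n\le d^n_p$; the source embedding at the critical value $s_1=2n/p$, however, demands $q_0\le q_1$, so a $q_0\in(1,q_1]\cap(1,\infty)$ exists precisely when $q_1>1$. For such $q_1$ one gets strict $s_1>2n/p$; for $q_1=1$ only $s_1<2n/p$ can be ruled out, leaving $s_1\ge 2n/p$. The symmetric argument via Remark~\ref{R3.2} for $1<p<2$ produces $s_2<-2n(1-1/p)$ when $q_2<\infty$ and only $s_2\le-2n(1-1/p)$ when $q_2=\infty$, the obstruction now being the critical-line embedding $B^{s_2}_{p,q_2}(\rn)\hra B^{2n(1/p-1)}_{p,q}(\rn)$ with $q\in(1,\infty)$.

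The main obstacle is pinning down exactly which combinations of $p,q_1,q_2$ break the critical-line Besov embedding $B^{s}_{p,q_0}\hra B^{s}_{p,q_1}$ (which demands $q_0\le q_1$), since the Step~5 machinery is only available for $q_0\in(1,\infty)$; this singles out precisely the two advertised exceptional cases, while every other embedding appearing in the reduction is strict in the smoothness and hence imposes no $q$-restriction.
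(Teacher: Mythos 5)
Your proposal is correct and follows essentially the same route as the paper: an $\ve$-sandwich around Theorem~\ref{T3.1} for sufficiency; the conditions $s_2<-n$ ($2\le p<\infty$) and $s_1>n$ ($1<p\le 2$) for arbitrary $q_1,q_2$ via Hausdorff--Young factorisations through weighted Lebesgue spaces and Proposition~\ref{emb-w-nuc}(ii); and the remaining (strict, respectively non-strict) conditions from the Step~5/Remark~\ref{R3.2} lifting contradiction, with the two exceptional cases arising exactly, as you say, from the $q$-monotonicity of the critical-line embeddings. The only real deviation is that you derive $s_1>n$ for $1<p\le 2$ by transposing the factorisation ($\Fti\circ\mathrm{incl}\circ\Ft=\id$ into $L_{p'}(\rn,w_{-k})$, a device the paper itself uses in the proof of Proposition~\ref{ext-q-F}) instead of the paper's duality argument based on the symmetry of the ideal $\Nc$ and $(B^{s_1}_{p,1}(\rn))'=B^{-s_1}_{p',\infty}(\rn)$; both work, and yours treats all $q_1\in[1,\infty]$ uniformly without invoking Besov duality.
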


\begin{proof}
{\em Step 1}.~ The sufficiency of the  assumptions \eqref{3.11} for $q_1=1$ and $q_2=\infty$  can be proved in exactly the same way as in Proposition~\ref{ext-q-F}, Step~1 of its proof.

{\em Step 2}.~ As for the necessity in case of $q_1=1$, $1<p\leq 2$, or $q_2=\infty$ and $2\leq p<\infty$, we can follow the same arguments as presented at the beginning of Step~2 in the proof of Proposition~\ref{ext-q-F}.

{The remaining cases follow from the elementary embeddings $B^{s+\ve}_{p,p}(\rn)\hra \Bs(\rn) \hra B^{s-\ve}_{p,p}(\rn)$, $1<p<\infty$, $1\leq q\leq\infty$, $\ve>0$, and Theorem~\ref{T3.1}.}
\ignore{It remains to consider the cases $2<p<\infty$ and $q_1=1$, or $1<p<2$ and $q_2=\infty$. Let first $2<p<\infty$ and $q_1=1$ and choose $r$ with $2<r<p$ and $ s_3=s_2+\frac{n}{r} -\frac{n}{p}+\varepsilon$, $\varepsilon>0$,  replacing \eqref{LS0}. Arguing as above we get $s_1>\frac{2n}{p}-\varepsilon$. So taking the infimum over $\varepsilon>0$ we get   $s_1\ge\frac{2n}{p}$. The case $1<p<2$ and $q_2=\infty$ can be proved analogously.     }
\end{proof}

%\begin{remark}
\ignore{The above theorem holds almost unchanged  $q_1$ and $q_2$ equal $1$ or $\infty$. More precisely the sufficiency part of Theorem~\ref{T3.1} holds also  if $1\le q_1\le \infty$ or $1\le q_2\le \infty$. Moreover the conditions \eqref{3.11} are necessary also for $F^s_{p,q}(\rn)$ scale i.e. if $A^{s_1}_{p,q_1}(\rn)=F^{s_1}_{p,q_1}(\rn)$ and $A^{s_2}_{p,q_2}(\rn)=F^{s_2}_{p,q_2}(\rn)$, $1\le q_1,q_2\le \infty$.  
If $A^{s_1}_{p,q_1}(\rn)=B^{s_1}_{p,q_1}(\rn)$ and $A^{s_2}_{p,q_2}(\rn)=B^{s_2}_{p,q_2}(\rn)$, $1\le q_1,q_2\le \infty$ we can prove that  the conditions  \eqref{3.11}   are necessary, except for two cases $2<p<\infty$ and $q_1=1$ or $1<p<2$ and $q_2=\infty$. If $A^{s_1}_{p,q_1}(\rn)=B^{s_1}_{p,1}(\rn)$, $2<p<\infty$ it can be proved that $s_1$ should satisfied the following inequality $s_1\ge\frac{2n}{p}$.  Analogously if $A^{s_2}_{p,q_2}(\rn)=B^{s_2}_{p,\infty}(\rn)$, $1<p<2$ Then  $s_1$ should  satisfied the  $s_2\le -2n(1-\frac{1}{p})$. 

 The above extension of Theorem~\ref{3.1} can be proved as follows.      }
%Theorem \ref{T3.1} holds for  with $1<p<\infty$ and $1\le q\le \infty$.        

% \end{remark}

Next we consider the case $p=1$. If $1\leq q_1,q_2<\infty$, we can extend Theorem~\ref{T3.1} in the desired way. 

\begin{proposition}\label{Prop3.3}
	Let 
	$1\le q_1,q_2<\infty$  and let $s_1,s_2 \in \real$. Then
	\begin{\eq}    \label{HT3}
		\Ft: \quad A^{s_1}_{1, q_1} (\rn) \hra A^{s_2}_{1, q_2} (\rn)
	\end{\eq}
	is nuclear if, and only if,
	\begin{\eq}   \label{LS11}
		s_1 > n \qquad \text{and}\qquad s_2<0 . 
	\end{\eq}
\end{proposition}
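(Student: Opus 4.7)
The plan is to combine the strategies of Step~2 and Step~5 of the proof of Theorem~\ref{T3.1}, using the endpoint Hausdorff--Young inequality $\Ft: L_1(\rn) \hra L_\infty(\rn)$ furnished by \eqref{1.3} at $p=1$.

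For the sufficiency direction, given $s_1 > n$ and $s_2 < 0$, first pick $\varepsilon > 0$ with $s_1-\varepsilon>n$ and $s_2+\varepsilon<0$. The monotonicity of Besov spaces in $s$ and $q$, the elementary embeddings \eqref{2.8}, and the ideal property of $\Nc$ reduce the proof to showing that
\[
\Ft:\quad B^{s_1-\varepsilon}_{1,1}(\rn) \hra B^{s_2+\varepsilon}_{1,\infty}(\rn)
\]
is nuclear. I would rely on the wavelet expansion \eqref{3.9} and take as representation $a'_\nu = 2^{jn}\Ft\psi^j_{G,m}$ viewed in $(B^{s_1-\varepsilon}_{1,1}(\rn))'=B^{-(s_1-\varepsilon)}_{\infty,\infty}(\rn)$ (by \eqref{3.8}) and $b_\nu = \psi^j_{G,m} \in B^{s_2+\varepsilon}_{1,\infty}(\rn)$, so that $\|b_\nu\,|\,B^{s_2+\varepsilon}_{1,\infty}(\rn)\| \sim 2^{j(s_2+\varepsilon-n)}$ by \eqref{2.26}. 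To estimate the dual norm of $a'_\nu$ I would apply the lift \eqref{2.11}, the embedding $L_\infty(\rn) \hra B^0_{\infty,\infty}(\rn)$, and the identity $(w_{-(s_1-\varepsilon)}\wh{\Ft\psi^j_{G,m}})^{\vee} = \Ft(w_{-(s_1-\varepsilon)}\psi^j_{G,m})$, which follows from $\wh{\Ft\vp}=\vp(-\cdot)$ and the evenness of $w_\alpha$. The endpoint Hausdorff--Young inequality together with the support property $\supp \psi^j_{G,m} \subset \{x\in\rn : |x-2^{-j}m|\le c\, 2^{-j}\}$ then yields, in analogy with \eqref{3.15},
\[
\|\Ft\psi^j_{G,m}\,|\,B^{-(s_1-\varepsilon)}_{\infty,\infty}(\rn)\| \le c\,(1+2^{-j}|m|)^{-(s_1-\varepsilon)}\,2^{-jn}.
\]
Carrying out the triple sum as in \eqref{3.16}, the $m$-sum $\sum_{m\in\zn}(1+2^{-j}|m|)^{-(s_1-\varepsilon)}$ is of order $2^{jn}$ precisely because $s_1-\varepsilon>n$, and the remaining geometric $j$-sum $\sum_j 2^{j(s_2+\varepsilon)}$ converges precisely because $s_2+\varepsilon<0$, yielding a finite bound for the nuclear norm.

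For the necessity direction, Corollary~\ref{F-comp-cor} already gives $s_1\ge n$ and $s_2\le 0$ since nuclear operators are compact, so it remains to exclude the boundary values. I would adapt the factorisation argument of Step~5 in the proof of Theorem~\ref{T3.1} (see also Remark~\ref{R3.2}), based on the identity $\Fti\circ I_{-n}\circ \Ft = \Ft\circ I_{-n}\circ \Fti = W_{-n}$, where $W_{-n}$ is multiplication by $w_{-n}$. If $s_2 = 0$, the continuous chain
\[
B^{s_1}_{1,q_1}(\rn) \os{\Ft}{\hra} B^{0}_{1,q_2}(\rn) \os{I_{-n}}{\hra} B^{n}_{1,q_2}(\rn) \os{\Fti}{\hra} B^{0}_{1,q_3}(\rn),
\]
in which the last arrow is continuous at the barrier by Theorem~\ref{Thm-comp}, would transfer nuclearity of $\Ft$ to that of $W_{-n}$, and via the isomorphism \eqref{2.15} to nuclearity of $\id: B^{s_1}_{1,q_1}(\rn) \hra B^{0}_{1,q_3}(\rn,w_{-n})$. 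But Proposition~\ref{emb-w-nuc}(ii) with $p_1=p_2=1$ requires the \emph{strict} inequality $\alpha>n$, whereas here $\alpha=n$; this contradicts the nuclearity. The case $s_1=n$ is excluded by the symmetric chain $B^n_{1,q_0}(\rn)\os{\Fti}{\hra} B^0_{1,q_1}(\rn) \os{I_{-n}}{\hra} B^n_{1,q_1}(\rn) \os{\Ft}{\hra} B^{s_2}_{1,q_2}(\rn)$, and the $F$-space case by reducing to $B$-spaces through \eqref{2.8}.

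The main obstacle I expect is verifying cleanly that the endpoint Hausdorff--Young bound survives the lift to $B^{-(s_1-\varepsilon)}_{\infty,\infty}(\rn)$, so that the sharp thresholds $s_1>n$ and $s_2<0$ emerge from the $m$- and $j$-summations of \eqref{3.16}; once this lifting is checked, the remainder of both the sufficiency and necessity arguments is a matter of bookkeeping analogous to the proof of Theorem~\ref{T3.1}.
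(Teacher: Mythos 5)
Your sufficiency argument is essentially the one in the paper: the same wavelet representation \eqref{3.9}, the same dual-norm estimate for $2^{jn}\Ft \psi^j_{G,m}$ obtained by lifting and an endpoint Hausdorff--Young type bound over the support of $\psi^j_{G,m}$, and the same two-parameter summation as in \eqref{3.16}, producing exactly the thresholds $s_1>n$ and $s_2<0$. (The paper keeps $q_1$ general and therefore routes the estimate through $\Ft\colon L_1(\rn)\hra B^0_{\infty,1}(\rn)\hra B^0_{\infty,q_1'}(\rn)$, quoting Taibleson; after your reduction to $q_1=1$ the cruder chain $\Ft\colon L_1(\rn)\hra L_\infty(\rn)\hra B^0_{\infty,\infty}(\rn)$ suffices, so this half is sound.)

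The necessity half has a genuine gap. Both Corollary~\ref{F-comp-cor}, which you invoke to obtain the non-strict inequalities $s_1\ge n$, $s_2\le 0$, and Theorem~\ref{Thm-comp}, which you invoke to justify continuity of the last arrow $\Fti\colon B^{n}_{1,q_2}(\rn)\hra B^{0}_{1,q_3}(\rn)$ in your factorisation, are stated only for $1<p<\infty$; the paper explicitly defers $p=1$ to Section~\ref{S3.3} precisely because these results are not available there. The missing continuity is moreover not a formality: it is an endpoint Bernstein-type assertion, and at smoothness exactly $n$ the third index matters (the classical embedding is $B^{n}_{1,1}(\rn)\hra \Ft L_1(\rn)$, which does not cover $q_2>1$), so the chain you propose for excluding $s_2=0$ is not justified for general $q_2$. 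The paper sidesteps both issues at once: from the Taibleson-type bound $\|w_{s_2}\Ft f\,|\,L_\infty(\rn)\|\le c\,\|f\,|\,B^{s_2}_{1,\infty}(\rn)\|$ in \eqref{HT5} one composes the assumed nuclear map \eqref{HT3} with a second, merely continuous, application of $\Ft$ to obtain a nuclear identity $\id\colon B^{s_1}_{1,q_1}(\rn,w_{-s_2})\hra B^{0}_{\infty,\infty}(\rn)$ as in \eqref{HT6}; Proposition~\ref{emb-w-nuc}(ii) with $p_1=1$, $p_2=\infty$ then yields the \emph{strict} inequalities $-s_2>0$ and $s_1>n$ in a single stroke, with no separate treatment of the boundary and no endpoint continuity theorem for $\Ft$ at $p=1$ beyond \eqref{HT5}. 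If you want to salvage your route, you would have to prove the $p=1$ analogues of Theorem~\ref{Thm-comp} first, which is more than the proposition itself requires.
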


\begin{proof}
	It is sufficient to consider the Besov spaces, i.e.,
	\begin{\eq}   \label{LS12}
		\Ft: \quad B^{s_1}_{1,q_1} (\rn) \hra B^{s_2}_{1,q_2} (\rn), \qquad 1\le q_1,q_2<\infty. 
	\end{\eq}
{By \cite[Thm.~2.11.2, p.178]{T83}} 
 we have the following duality 
	\[ B^{s_1}_{1,q_1} (\rn)' = B^{-s_1}_{\infty,q_1'} (\rn)\] 
	and according to \eqref{2.26} the estimates for the norms of the wavelets  
	\begin{\eq}   \label{LS13}
		2^{jn} \| \psi^j_{G,m} \, | B^{s_2}_{1,q_2} (\rn) \| \sim 2^{j s_2}, \qquad j \in \no, \quad m\in \zn, \quad G\in G^j.
	\end{\eq} 
 
	 Using the lift property for Besov spaces and  continuity properties of the Fourier transform acting into  Besov spaces, cf.  \cite[Theorem 1]{Tai},  one  obtains 	
	 \begin{align}   \label{LS14}
		%\begin{aligned}
			\| \Ft \psi^j_{G,m} \, | B^{-s_1}_{\infty,q_1'} (\rn) \|  & \le \| I_{-s_1}\Ft\psi^j_{G,m}| B^{0}_{\infty,q_1'} (\rn) \|  \\ \nonumber
			&\le \| \Ft w_{-s_1} \psi^j_{G,m} \, | B^{0}_{\infty, 1} (\rn) \|    
			\le c \, \| w_{-s_1} \psi^j_{G,m} \, | L_1 (\rn) \| \\ \nonumber
			& \le c (1 + 2^{-j} |m| )^{-s_1} \, 2^{-j n}, 
		%\end{aligned}
	\end{align}
	$j \in \no$, $m\in \zn$. Then \eqref{3.9}, \eqref{LS13}, \eqref{LS14} applied to \eqref{3.1}, \eqref{3.2} show {in the same way as in \eqref{3.16}} that
	\begin{\eq}   \label{HT4}
		\begin{aligned}
			\| \Ft \, | \Nc \big(B^{s_1}_{1,q_1} (\rn), B^{s_2}_{1,q_2} (\rn) \big) \| & \le c \, \sum^\infty_{j=0} \sum_{m \in \zn} (1 + 2^{-j} |m|)^{-s_1}
			2^{j(s_2 -n)} \\
		%	&\le c' \sum^\infty_{j=0} 2^{j(s_2 -n)} \Big( \sum_{|m| \le 2^j} 1 + \sum^\infty_{k=1} 2^{-k s_1} 2^{(j+k)n} \Big)  \\
			&\le c'' \, \sum^\infty_{j=0} 2^{js_2} \, \sum^\infty_{k=0} 2^{-k(s_1 -n)} <\infty
		\end{aligned}
	\end{\eq}
	if both $s_1 >n$ and $s_2 < 0$. This proves that $\Ft$ is nuclear as claimed.  
	 
	 Now we assume that $\Ft$ given by \eqref{HT3} is a nuclear operator. Using the continuity of the Fourier transform defined on Besov spaces, cf.  \cite[Theorem 1]{Tai},  we get
	  \begin{align}   \label{HT5}
	 	\| w_{s_2}\Ft f\, | L_\infty(\rn) \|  & = \| \Ft \Fti w_{s_2}\Ft f|  L_\infty(\rn) \| \\
	 	& \le  \| I_{s_2}f| B^{0}_{1,\infty} (\rn) \| \le c 
	 	\| f| B^{s_2}_{1,\infty} (\rn) \|. 
	 	\nonumber
	 \end{align} 
We combine {\eqref{LS12}} %\eqref{HT3} 
and \eqref{HT5} with the identity $\id=\Ft\circ \Fti$ and get the following nuclear embedding 
 	\begin{\eq}   \label{HT6}
 	\id:\; B^{s_1}_{1,q_1} (\rn) \stackrel{\Ft}{\hra} B^{s_2}_{1,q_2} (\rn) \stackrel{\Ft}{\hra} L_\infty(w_{s_2},\rn).
 \end{\eq}
Thus, the embedding 
 \[ \id: \; B^{s_1}_{1,q_1} (\rn, w_{-s_2}) \hookrightarrow B^0_{\infty,\infty}(\rn)\]
 is also  nuclear, and another application of Proposition~\ref{emb-w-nuc}(ii) implies $-s_2>0$ and $s_1>n$. 
\end{proof}

%\open{Here we should extend the statement of Proposition~\ref{emb-w-nuc} also to limiting cases.}

\begin{remark}
We would like to mention that one can also use a more direct argument to prove the above extensions. This would be based on the modifications
\begin{\eq}   \label{3.50}
\Ft: \quad L_1 (\rn) \hra B^0_{\infty,1} (\rn)
\end{\eq}
and
\begin{\eq}   \label{3.51}
\Ft: \quad B^0_{1, \infty} (\rn) \hra L_\infty (\rn)
\end{\eq}
of the Hausdorff--Young mappings. Here \eqref{3.50} follows from
\begin{\eq}   \label{3.52}
\begin{aligned}
\| \Ft f \, |B^0_{\infty,1} (\rn)\| &\sim \sum^\infty_{j=0} \| \Fti (\vp_j f)\,| L_\infty (\rn) \| \\
&\le c \sum^\infty_{j=0} \| \vp_j f \, | L_1 (\rn) \| \\
&\sim \| f \, | L_1 (\rn) \|
\end{aligned}
\end{\eq}
and \eqref{3.51} from
\begin{\eq}   \label{3.53}
\begin{aligned}
\|\Ft f\,| L_\infty (\rn) \| &\sim \sup_{j\in \no} \| \vp_j \Ft f \, | L_\infty (\rn) \| \\
&\le c \, \sup_{j\in \no} \| \Fti \vp_j \Ft f \, | L_1 (\rn) \| \\
& = c \, \| f\, | B^0_{1,\infty} (\rn) \|.
\end{aligned}
\end{\eq}
\end{remark}

Finally, by duality arguments, one can cover the case $p=\infty$, when $A=B$ and $1<q_1,q_2\leq\infty$. But using Proposition~\ref{Prop3.3}, we have even a characterisation in this case.

 \begin{corollary}\label{Cor3.4}
 	Let 
 $1 < q_1,q_2\le \infty$  and $s_1,s_2 \in \real$. Then
 \begin{\eq}    \label{HT7}
 	\Ft: \quad B^{s_1}_{\infty, q_1} (\rn) \hra B^{s_2}_{\infty, q_2} (\rn)
 \end{\eq}
 is nuclear if, and only if,
 \begin{\eq}   \label{HT8}
 	s_1 > 0 \qquad \text{and}\qquad s_2<-n . 
 \end{\eq}	
 \end{corollary}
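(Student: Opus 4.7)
The plan reduces Corollary~\ref{Cor3.4} to Proposition~\ref{Prop3.3} in two halves — sufficiency by duality and necessity by a direct argument along the lines of Step~2 of the proof of Proposition~\ref{Prop3.3}. From \cite[Theorem~2.11.2, p.178]{T83} we have the duality $B^{-s}_{1,q'}(\rn)' = B^s_{\infty,q}(\rn)$ for $1 < q \leq \infty$ in the pairing $(\SRn, \SpRn)$, together with $\Ft' = \Ft$ in the same pairing. Hence the Banach-space adjoint of the ``predual'' map
\begin{equation*}
  \Ft : \; B^{-s_2}_{1, q_2'}(\rn) \hra B^{-s_1}_{1, q_1'}(\rn), \qquad 1 \leq q_1', q_2' < \infty,
\end{equation*}
is exactly the mapping \eqref{HT7}. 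If \eqref{HT8} holds (so $-s_2 > n$ and $-s_1 < 0$), Proposition~\ref{Prop3.3} yields nuclearity of the predual map, and the symmetry of the nuclear operator ideal (Section~\ref{S3.1}) transfers it to the adjoint, giving sufficiency of \eqref{HT8}.

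For the necessity, assume \eqref{HT7} is nuclear. The key auxiliary ingredient is the continuous mapping
\begin{equation*}
  \Ft : \; L_1(\rn, w_{s_1}) \hra B^{s_1}_{\infty, q_1}(\rn),
\end{equation*}
which follows from the lifting \eqref{2.10}--\eqref{2.11} (yielding $\|\Ft f \, | B^{s_1}_{\infty, q_1}(\rn)\| \sim \|\Ft(w_{s_1} f) \, | B^0_{\infty, q_1}(\rn)\|$ after using the parity identity $(\Ft \circ \Ft) f = f(-\cdot)$), combined with the refined Hausdorff--Young inequality \eqref{3.50} and the elementary embedding $B^0_{\infty, 1}(\rn) \hra B^0_{\infty, q_1}(\rn)$. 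Composing this with the assumed nuclear \eqref{HT7} and exploiting that $\Ft \circ \Ft$ is a parity isomorphism on the (parity-invariant) spaces involved shows that
\begin{equation*}
  \id : \; B^0_{1, 1}(\rn, w_{s_1}) \hra L_1(\rn, w_{s_1}) \hra B^{s_2}_{\infty, q_2}(\rn)
\end{equation*}
is nuclear. Proposition~\ref{emb-w-nuc}(ii) applied with $p_1 = 1$, $p_2 = \infty$, smoothness parameters $0$ and $s_2$, and weight exponent $\alpha = s_1$ then delivers $s_1 > 0$ and $s_2 < -n$, i.e.\ \eqref{HT8}.

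The main delicate point is the verification of the Hausdorff--Young-type continuity $\Ft: L_1(\rn, w_{s_1}) \hra B^{s_1}_{\infty, q_1}(\rn)$, where the lifting identity must be unpacked carefully (and one uses parity invariance of $w_{s_1}$ to move the weight through the Fourier transform). Once this auxiliary step is in place, the rest of the necessity argument is a direct application of the weighted nuclear embedding characterisation in Proposition~\ref{emb-w-nuc}(ii), mirroring Step~2 of the proof of Proposition~\ref{Prop3.3}.
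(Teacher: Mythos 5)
Your proof is correct. The sufficiency half is essentially the paper's own argument: Proposition~\ref{Prop3.3} gives nuclearity of the predual map $\Ft: B^{-s_2}_{1,q_2'}(\rn) \hra B^{-s_1}_{1,q_1'}(\rn)$ under \eqref{HT8}, and the symmetry of the ideal $\Nc$ together with $B^{s}_{1,q}(\rn)'=B^{-s}_{\infty,q'}(\rn)$ and $\Ft'=\Ft$ transfers this to \eqref{HT7}. The necessity half, however, takes a genuinely different route. The paper cannot dualise back (Remark~\ref{R3.7}: $\Nc$ is not injective and there is no bounded projection from $\ell_\infty$ onto $c_0$), so it downgrades nuclearity to compactness, restricts to the closures $\os{\circ}{B}{}^{s}_{\infty,q}(\rn)$ of $\SRn$, applies the duality \eqref{3.60} to obtain a compact map at $p=1$, and then exploits the coincidence of compactness and nuclearity in that case. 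You instead transplant Step~2 of the proof of Proposition~\ref{Prop3.3} directly to the $p=\infty$ side: the continuity of $\Ft: L_1(\rn,w_{s_1})\hra B^{s_1}_{\infty,q_1}(\rn)$ (which does follow from the lift \eqref{2.11}, the parity identity $\Ft\Ft f=f(-\cdot)$, the evenness of $w_{s_1}$, \eqref{3.50} and $B^0_{\infty,1}(\rn)\hra B^0_{\infty,q_1}(\rn)$), composed with the assumed nuclear map, yields via the ideal property a nuclear embedding to which Proposition~\ref{emb-w-nuc}(ii) applies, giving $s_1>0$ and $s_2<-n$ in one stroke. This sidesteps the duality subtleties of Remark~\ref{R3.7} altogether; the paper's route, in exchange, reuses the already established compactness characterisation and needs no new Hausdorff--Young-type estimate. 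One small point, shared with the paper's own applications of Proposition~\ref{emb-w-nuc}(ii): that proposition carries the standing hypothesis $\alpha=\alpha_1-\alpha_2>0$, so for $s_1\le 0$ you should add the one-line observation that the embedding in question is then not even compact (by \eqref{3.6a}), hence not nuclear.
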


 \begin{proof}
   The sufficiency follows from Proposition~\ref{Prop3.3} in case of $A=B$ and duality, that is,  
\begin{\eq}  \label{3.58}
\Ft: \quad B^{s_1}_{\infty, q_1} (\rn) \hra B^{s_2}_{\infty, q_2} (\rn), \qquad 1<q_1, q_2 \le \infty,
\end{\eq}
is nuclear if, $s_1 >0$ and $s_2 <-n$. 

We come to the necessity. Note first, that by Proposition~\ref{Prop3.3}  
\begin{\eq}   \label{3.57}
\Ft: \quad B^{s_1}_{1, q_1} (\rn) \hra B^{s_2}_{1, q_2} (\rn), \qquad 1\le q_1, q_2 < \infty,
\end{\eq}
is nuclear if, and only if, $s_1 >n$, $s_2 <0$. Then $\Ft$ is also compact.  Conversely, if $\Ft$ in \eqref{3.57} is compact, then it
follows {from \eqref{3.6a}} by the same reduction as {in \eqref{HT6}} to weighted spaces that again $s_1 >n$, $s_2 <0$. In other words, $\Ft$ in \eqref{3.57} is nuclear
if, and only if, it is compact.

Let now, conversely, $\Ft$ in \eqref{3.58} for some $s_1 \in \real$ and $s_2 \in \real$, be
nuclear. Then for the same $s_1 \in \real$ and $s_2 \in \real$ both $\Ft$ in \eqref{3.58} and
\begin{\eq}   \label{3.59} 
\Ft: \quad \os{\circ}{B}{}^{s_1}_{\infty, q_1} (\rn) \hra \os{\circ}{B}{}^{s_2}_{\infty,q_2} (\rn), \qquad 1<q_1, q_2 \le \infty,
\end{\eq}
are compact. \blue{Here $\os{\circ}{B}{}^{s}_{\infty, q}(\rn)$ stands for the closure of {$\SRn$} %the compactly supported smooth functions
  in $B^s_{\infty,q}(\rn)$, which is a proper subspace of $B^s_{\infty,q}(\rn)$.}
Using the duality  
\begin{\eq}   \label{3.60}
\os{\circ}{B}{}^s_{\infty, q} (\rn)' = B^{-s}_{1, q'} (\rn), \qquad s\in \real, \quad 1\le q \le \infty, \quad \frac{1}{q} + 
\frac{1}{q'} =1,
\end{\eq}
cf. \cite[Remark 2.11.2/2, p.\,180]{T83}, then
\begin{\eq}   \label{3.61}
\Ft: \quad B^{-s_2}_{1, q'_2} (\rn) \hra B^{-s_1}_{1, q'_1} (\rn), \qquad 1 \le q'_1, q'_2 <\infty,
\end{\eq}
is compact. This requires $-s_2 >n$ and $-s_1 <0$.
\end{proof}

%\open{Why? Is there an extension of Theorem~\ref{Thm-comp} (or in \cite{T21}) to the case $p=1$?}

\begin{remark}   \label{R3.7}
%Corollary~\ref{Cor3.4} can be extended from 'if' to 'if, and only if' as follows. Let $s_1\in \real$ and $s_2 \in \real$.
Note that the nuclear counterpart of the argument in \eqref{3.59} is not clear, maybe not true, as there is no projection operator from 
$\ell_\infty$ onto $c_0$, \cite[Corollary 2.5.6, p.\,46]{AlK06}, on which a related proof could be based. Furthermore, according to 
\cite[p.\,343]{Pie07} the operator ideal  $\Nc$ is not injective which would otherwise ensure the nuclear version of \eqref{3.59}.
\end{remark}

\begin{remark}\label{R-last}
{  Let us remark that 
\begin{\eq}   \label{3.57a}
\Ft: \quad B^{s_1}_{1, q_1} (\rn) \hra B^{s_2}_{1, q_2} (\rn), \qquad 1\le q_1, q_2 < \infty,
\end{\eq}
is nuclear if, and only if, it is compact. The same phenomenon can be observed for
\begin{\eq}  \label{3.58a}
\Ft: \quad B^{s_1}_{\infty, q_1} (\rn) \hra B^{s_2}_{\infty, q_2} (\rn), \qquad 1<q_1, q_2 \le \infty,
\end{\eq}
which is nuclear if, and only if, it is compact. In view of Theorems~\ref{Thm-comp} and \ref{T3.1} this is different from the situation for $1<p<\infty$, when the conditions for the nuclearity of $\Ft$ are indeed stronger than for its compactness.} In other words, for $\Ft : B^{s_1}_{p,q_1}(\rn) \hra B^{s_2}_{p,q_2}(\rn)  $  compactness and nuclearity coincide if, and only if, $p=1$ or $p=\infty$ (with appropriately chosen $q_1,q_2$), as can be also seen from the 
   reformulated conditions in Remark~\ref{R-thm-nuc} or in Figure~\ref{fig-3}. We always have $n-\tau^{n+}_{p'}\geq \tau^{n+}_p$ and $-n-\tau^{n-}_{p'}\leq \tau^{n-}_p$, with equality in case of $p=1$ or $p=\infty$. A similar phenomenon was observed in \cite[Cor.~3.16, Rem.~3.18]{HaS20} related to the situations on domains as described in Proposition~\ref{nuc-dom}, and for weighted spaces, recall Proposition~\ref{emb-w-nuc}.
   \end{remark}

\ignore{One can also give a direct proof of the argument used in \eqref{HT5}, using 
\begin{\eq}  
	\Ft: \quad B^0_{1,\infty} (\rn) \to L_\infty (\rn)
	\end{\eq}
	based on
	\begin{\eq}
	\begin{aligned}
	\|\Ft f\,|L_\infty(\rn) \| &\sim \sup_{j\in \no} \|\vp_j \Ft f \, | L_\infty (\rn)\| \\
	&\le c\, \sup_{j\in \no} \| \Fti \vp_j \Ft f\,|L_1 (\rn) \|   \\
	&=c \,\|f \, | B^0_{1,\infty} (\rn) \|.
	\end{aligned}
	\end{\eq}}

\ignore{
Based on these modifications one can modify the related arguments in the proof of Theorem~\ref{T3.1} with the following outcome.

\begin{proposition}   \label{P3.3}
As stated in Proposition~\ref{Prop3.3}.
\end{proposition}

By duality one obtains the following assertion.

\begin{corollary}   \label{C3.4}
As stated in Corollary~\ref{Cor3.4}.
\end{corollary}
}

\ignore{
One may ask to which extent Theorem~\ref{T3.1} remains valid if one admits that $q_1$ and $q_2$ may be $1$ or $\infty$. Let $1<p<\infty$ and
$1\le q_1, q_2 \le \infty$. Then it follows by elementary embedding that
\begin{\eq}   \label{3.54}
\Ft: \quad A^{s_1}_{p, q_1} (\rn) \hra A^{s_2}_{p, q_2} (\rn), \qquad 1\le q_1, q_2 \le \infty,
\end{\eq}
is nuclear if $s_1$ and $s_2$ are restricted by \eqref{3.11}. If the continuous mapping $\Ft$ in \eqref{3.54} is nuclear, then one has by the same
type of arguments and Theorem~\ref{T3.1} that
\begin{\eq}   \label{3.55}
s_1 \ge
\begin{cases}
n &\text{for $1<p \le 2,$} \\
\frac{2n}{p} &\text{for $2<p <\infty,$}
\end{cases}
\quad \text{and} \quad
s_2 \le
\begin{cases}
-2n (1 - \frac{1}{p}) &\text{for $1<p \le 2$}, \\
-n & \text{for $2<p <\infty.$}
\end{cases} 
\end{\eq}
Furthermore, the well--known inclusions for the spaces $\As (\rn)$ show that \eqref{3.11} is also necessary if $1<q_1 \le \infty$ and $1 \le q_2
<\infty$. The situation is not so clear in the remaining cases. But one has at least for the spaces $\Fs (\rn)$  with $1<p<\infty$ the following 
assertion.}

\ignore{
  \begin{proposition}  \label{P3.5}
Let $1<p<\infty$ and $1 \le q_1, q_2 \le \infty$. Let $s_1 \in \real$, $s_2 \in \real$. Then
\begin{\eq}   \label{3.56}
\Ft: \quad F^{s_1}_{p, q_1} (\rn) \hra F^{s_2}_{p, q_2} (\rn)
\end{\eq}
is nuclear if, and only if, $s_1$ and $s_2$ satisfy \eqref{3.11}.
\end{proposition}

\begin{proof}
Better rely on \cite[Corollary 3.17]{HaS20} (in extension of \eqref{3.3}--\eqref{3.5}), otherwise  as stated.
\end{proof}

\begin{remark}   \label{R3.6}
We used a special case of the so--called Franke--Jawerth embedding for $F$--spaces. There is no counterpart for the $B$--spaces. But
one can extend Step 5 of the proof of Theorem~\ref{T3.1} for $2 \le p <\infty$ to $q=1$ and its counterpart for $1<p \le 2$ as outlined
in Remark~\ref{R3.2} to $q= \infty$. This shows that Proposition~\ref{P3.5} with $B$ in place of $F$ remains valid, relying again on
\cite[Corollary 3.17]{HaS20}.
\end{remark}
}

\ignore{
We could remark that for $\Ft$ in \eqref{3.10} compactness and nuclearity coincide if, and only if, $p=1$ or $p=\infty$, as can be (also) seen from the 
reformulated conditions in Remark~\ref{R-thm-nuc} or in Figure~\ref{fig-3}. We always have $n-\tau^{n+}_{p'}\geq \tau^{n+}_p$ and $-n-\tau^{n-}_{p'}\leq \tau^{n-}_p$, with equality in case of $p=1$ or $p=\infty$. A similar phenomenon was observed in \cite[Cor.~3.16, Rem.~3.18]{HaS20}.
}

\section{Weighted spaces}   \label{S4}
Let again $\As (\rn, w_\alpha)$, $A \in \{B,F \}$, and $s,p,q$ as in Definition~\ref{D2.1} be the weighted spaces as introduced in Remark~\ref{R2.3} where we restrict ourselves to the distinguished weights
\begin{\eq}   \label{4.1}
w_\alpha (x) = \big( 1 + |x|^2)^{\alpha/2}, \qquad x \in \rn, \quad \alpha \in \real.
\end{\eq}
So far we 
concentrated  mainly on the unweighted spaces $\As (\rn)$ and used their weighted generalisations as tools caused by the specific
mapping properties  of $\Ft$. But under these circumstances it is quite natural to ask how weighted counterparts of
the main assertions obtained in the above
Section~\ref{S3} and in \cite{T21} may look like. Fortunately enough there is no need  to extend the quite substantial  machinery
underlying the related theory for the spaces $\As (\rn)$ to the weighted spaces $\As (\rn, w_\alpha)$ (what might be possible), but
there is an effective short--cut based on qualitative arguments which will be described below. We rely on the same remarkable 
properties of the spaces $\As (\rn, w_\alpha)$ which we already described in Section~\ref{S2.1} with a reference to \cite[Theorem
6.5, pp.\,265--266]{T06}. In particular, the multiplier
\begin{\eq}   \label{4.2a}
  W_\beta: \ f \mapsto w_\beta f, \qquad f\in \SpRn, \quad \beta\in\real,
\end{\eq}
is for all these spaces an isomorphic  mapping,
\begin{\eq}   \label{4.2}
  \begin{aligned}
    W_\beta \As\left(\rn, w_{\alpha+\beta}\right) & = \As(\rn, w_\alpha),\\ 
  \| w_\beta f \, | \As (\rn, w_{\alpha+\beta}) \| & \sim \| f \, | \As (\rn, w_\alpha) \|, \qquad \alpha \in \real, \quad \beta \in
  \real,
  \end{aligned}
\end{\eq}
and the lift $I_\gamma$, $\gamma\in\real$,
\begin{\eq}   \label{4.3}
I_\gamma: \quad f \mapsto \big( w_\gamma \wh{f}\, \big)^\vee = \big( w_\gamma f^\vee \big)^\wedge, \qquad f\in \SpRn, \quad \gamma \in\real,
\end{\eq}
for the spaces $\As (\rn)$
according to \eqref{2.11} generates also the isomorphic mappings
\begin{\eq}   \label{4.4}
\begin{aligned}
I_\gamma A^{s+\gamma}_{p,q} (\rn, w_\alpha) &= \As (\rn, w_\alpha), \\
 \|(w_\gamma \wh{f} )^\vee |  \As(\rn, w_\alpha) \|
&\sim \| f\, | A^{s+\gamma}_{p,q} (\rn, w_\alpha) \|,
\end{aligned}
\end{\eq}
$\alpha \in \real$, $\gamma \in \real$, $s\in \real$ and $0<p,q \le \infty$ ($p<\infty$ for $F$--spaces).

Note that by the definitions of $W_\beta$ in \eqref{4.2a} and $I_\gamma$ in \eqref{4.3},
\[
\Ft \circ W_\beta\circ I_\gamma =   I_\beta \circ W_\gamma \circ \Ft \quad \text{on}\quad \SpRn,
  \]
  which directly leads to the decomposition of $\Ft$ into 
  \begin{equation}
    \label{F-I-W}
    \Ft = W_{-\gamma} \circ I_{-\beta} \circ \Ft \circ W_\beta \circ I_\gamma \quad \text{on}\quad \SpRn.
        \end{equation}
We shall benefit from this observation below, see also Remark~\ref{R-diagram}.
        
Although not needed, it might illuminate what is going on that any $f\in \SpRn$ belongs to a suitable weighted space of the above
type. More precisely, one has for fixed $0<p,q \le \infty$ that
\begin{\eq}   \label{4.5}
\SRn = \bigcap_{\alpha \in \real, s\in \real} \Bs (\rn, w_\alpha) \quad \text{and} \quad 
\SpRn = \bigcup_{\alpha \in \real, s\in \real} \Bs (\rn, w_\alpha).
\end{\eq}
This is more or less known and may be found in \cite[(2.281), p.\,74]{T20} with a reference to \cite{Kab08} for a detailed proof.

In what follows we are not interested in generality. This may explain why we suppose as in Theorem~\ref{T3.1} that $1<p,q_1,q_2
<\infty$, whereas it is quite clear  that at least some of the arguments below apply also to a wider range of these parameters.

\begin{proposition}   \label{P4.1}
Let $1<p,q_1,q_2 < \infty$ and $s_1 \in \real$, $s_2 \in \real$. Let $-\infty <\alpha_1, \alpha_2, \beta, \gamma <\infty$ and $A \in
\{B,F \}$. Then there is a continuous mapping
\begin{\eq}   \label{4.6}
\Ft: \quad A^{s_1+\gamma}_{p,q_1} (\rn, w_{\alpha_1 + \beta}) \hra A^{s_2 +\beta}_{p,q_2} (\rn, w_{\alpha_2 +\gamma})
\end{\eq}
if, and only if, there is a continuous mapping
\begin{\eq}   \label{4.7}
\Ft: \quad A^{s_1}_{p,q_1} (\rn, w_{\alpha_1}) \hra A^{s_2}_{p,q_2} (\rn, w_{\alpha_2}).
\end{\eq}
Furthermore, $\Ft$ in \eqref{4.6} is compact if, and only if, $\Ft$ in \eqref{4.7} is compact, and $\Ft$ in \eqref{4.6} is nuclear if,
and only if, $\Ft$ in \eqref{4.7} is nuclear.
\end{proposition}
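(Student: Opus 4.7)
The plan is to reduce the weighted statement \eqref{4.6} to the unweighted-shifted statement \eqref{4.7} by means of the factorisation \eqref{F-I-W} together with the isomorphism properties in \eqref{4.2} and \eqref{4.4}. Both assertions (continuity, compactness, nuclearity) will then follow purely from the operator-ideal calculus, without revisiting any Fourier analysis.

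First, I would assemble the five isomorphisms that make up the decomposition \eqref{F-I-W}. By \eqref{4.4} the lift
\begin{equation*}
I_\gamma: \ A^{s_1+\gamma}_{p,q_1}(\rn,w_{\alpha_1+\beta}) \longrightarrow A^{s_1}_{p,q_1}(\rn,w_{\alpha_1+\beta})
\end{equation*}
is an isomorphism, and by \eqref{4.2} the multiplier
\begin{equation*}
W_\beta: \ A^{s_1}_{p,q_1}(\rn,w_{\alpha_1+\beta}) \longrightarrow A^{s_1}_{p,q_1}(\rn,w_{\alpha_1})
\end{equation*}
is also an isomorphism. On the target side, $I_{-\beta}: A^{s_2}_{p,q_2}(\rn,w_{\alpha_2}) \to A^{s_2+\beta}_{p,q_2}(\rn,w_{\alpha_2})$ and $W_{-\gamma}: A^{s_2+\beta}_{p,q_2}(\rn,w_{\alpha_2}) \to A^{s_2+\beta}_{p,q_2}(\rn,w_{\alpha_2+\gamma})$ are isomorphisms by the same references.

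Assuming \eqref{4.7} is continuous, the identity \eqref{F-I-W}, read on $\SpRn$, factorises the map \eqref{4.6} as
\begin{equation*}
\Ft = W_{-\gamma} \circ I_{-\beta} \circ \Ft \circ W_\beta \circ I_\gamma,
\end{equation*}
hence \eqref{4.6} is continuous as a composition of continuous maps. Conversely, writing the same identity in the form $\Ft = W_\gamma \circ I_\beta \circ \Ft \circ W_{-\beta} \circ I_{-\gamma}$ (obtained by relabelling $\beta \leftrightarrow -\beta$, $\gamma \leftrightarrow -\gamma$ and shifting $\alpha_j,s_j$ accordingly) shows that continuity of \eqref{4.6} forces continuity of \eqref{4.7}. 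The analogous bi-directional factorisation upgrades compactness to compactness and nuclearity to nuclearity, since the compact and nuclear operators form two-sided operator ideals, so pre- and post-composition with continuous (in particular, isomorphic) maps preserves membership, cf.\ Section~\ref{S3.1}.

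There is essentially no obstacle beyond the bookkeeping of the four parameters $\alpha_1,\alpha_2,\beta,\gamma$; the one point to verify carefully is that the isomorphism in \eqref{4.4} really applies with an admissible weight $w_\alpha$ on the image side (not only on $\SRn$), which is exactly the content of the extension \eqref{4.4} of the standard lift to its weighted counterpart as recorded after \eqref{4.3}. Once this is noted, the whole proposition is a one-line consequence of the identity \eqref{F-I-W} and the ideal property of $\Nc$ and of the class of compact operators.
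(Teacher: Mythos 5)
Your argument is correct and is essentially the paper's own proof: the paper establishes the two commutation relations $\Ft\circ W_\beta=I_\beta\circ\Ft$ and $\Ft\circ I_\gamma=W_\gamma\circ\Ft$ and applies them in two sub-steps, which amounts exactly to the single factorisation \eqref{F-I-W} you use (and which the paper itself displays as the commutative diagram in Remark~\ref{R-diagram}), with the reverse direction obtained by the same relabelling and compactness/nuclearity inherited via the ideal property. No gaps; your bookkeeping of the source and target spaces for $I_\gamma$, $W_\beta$, $I_{-\beta}$, $W_{-\gamma}$ matches \eqref{4.2} and \eqref{4.4}.
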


\begin{proof}
{\em Step 1.} Let $\Ft$ in \eqref{4.7} be continuous and let $f\in A^{s_1}_{p,q_1} (\rn, w_{\alpha_1 +\beta})$. Then it follows from
\eqref{4.2} that
\begin{\eq}   \label{4.8}
\| \Ft (w_\beta f) \, | A^{s_2}_{p,q_2} (\rn, w_{\alpha_2}) \| \le c \, \|w_\beta f \, | A^{s_1}_{p,q_1} (\rn, w_{\alpha_1}) \|.
\end{\eq}
By \eqref{4.3} one has
\begin{\eq}   \label{4.9}
%\Ft (w_\beta f) = (I_\beta \circ \Ft)f.
\Ft \circ W_\beta = I_\beta \circ \Ft.
\end{\eq}
Inserted in \eqref{4.8} one obtains by \eqref{4.2} and \eqref{4.4} that
\begin{\eq}   \label{4.10}
\| \Ft f \, | A^{s_2 + \beta}_{p, q_2} (\rn, w_{\alpha_2} ) \| \le c\, \| f \, | A^{s_1}_{p,q_1} (\rn, w_{\alpha_1 +\beta}) \|.
\end{\eq}
This proves the continuity of $\Ft$ in \eqref{4.6} with $\gamma =0$. Let again $\Ft$ in \eqref{4.7} be continuous and let $f \in
A^{s_1 + \gamma}_{p, q_1} (\rn, w_{\alpha_1} )$. Then it follows from \eqref{4.4} that 
\begin{\eq} \label{4.11}
\| \Ft (I_\gamma f) \, | A^{s_2}_{p,q_2} (\rn, w_{\alpha_2})\| \le c \, \|I_\gamma f\, |A^{s_1}_{p, q_1} (\rn, w_{\alpha_1} ) \|.
\end{\eq}
By \eqref{4.3} one has
\begin{\eq}   \label{4.12}
%\Ft (I_\gamma f ) = w_\gamma \Ft f.
\Ft \circ I_\gamma  = W_\gamma \circ \Ft.
\end{\eq}
Inserted in \eqref{4.11} one obtains by \eqref{4.2} and \eqref{4.4} that
\begin{\eq}   \label{4.13}
\| \Ft f\,| A^{s_2}_{p, q_2} (\rn, w_{\alpha_2 +\gamma}) \| \le c \, \|f \, | A^{s_1 +\gamma}_{p, q_1} (\rn, w_{\alpha_1})\|.
\end{\eq}
This proves the continuity of $\Ft$ in \eqref{4.6} with $\beta =0$. A {combination} of the above arguments for $\gamma =0$ and $\beta =0$
shows that $\Ft$ in \eqref{4.6} is continuous for all $\beta \in \real$ and $\gamma \in \real$ if $\Ft$ in \eqref{4.7} is continuous. 
But this covers also the reverse step from \eqref{4.6} to \eqref{4.7} and proves the above proposition as far as the continuity is
concerned.\\

{\em Step 2.} The above arguments combine supposed mapping properties for $\Ft$ with isomorphisms of type \eqref{4.2} and \eqref{4.4}.
But then not only continuity is inherited, but also compactness and nuclearity.
\end{proof}

\begin{remark}\label{R-diagram}
  The strategy of the above proof can be illustrated by the following commutative diagram:
  \[
    \begin{array}{ccccc}A^{s_1}_{p,q_1}(\rn, w_{\alpha_1}) & \xrightleftharpoons[W_\beta]{W_{-\beta}} &A^{s_1}_{p,q_1}(\rn, w_{\alpha_1+\beta})
      & \xrightleftharpoons[I_{\gamma}]{I_{-\gamma}} &A^{s_1+\gamma}_{p,q_1}(\rn, w_{\alpha_1+\beta})\\
 \Ft \Big\downarrow & & & &  \Big\downarrow \Ft\\
A^{s_2}_{p,q_2}(\rn, w_{\alpha_2})
& \xleftrightharpoons[I_{-\beta}]{I_\beta} &A^{s_2+\beta}_{p,q_2}(\rn, w_{\alpha_2})&  \xleftrightharpoons[W_{-\gamma}]{W_\gamma} &  A^{s_2+\beta}_{p,q_2}(\rn, w_{\alpha_2+\gamma})
\end{array}
  \]
Here the mappings \eqref{4.6} and \eqref{4.7} can be found on the left-hand and right-hand side of the diagram, while travelling around in the diagram is based on \eqref{F-I-W}.      
\end{remark}

Now one can extend assertions about continuity, compactness and nuclearity for the unweighted spaces $\As (\rn)$ to their weighted counterparts.

\begin{theorem}   \label{T4.2}
Let $1< p, q_1, q_2 <\infty$ and $s_1 \in \real$, $s_2 \in \real$. Let $\beta \in \real$, $\gamma \in \real$ and $A \in \{B,F \}$.
 \bli
\item
Let $d^n_p$ and $\tau^{n+}_p, \tau^{n-}_p$ be as in \eqref{2.27}, \eqref{2.28}. Then
\begin{\eq}   \label{4.14}
\Ft: \quad A^{s_1 +\gamma}_{p, q_1} (\rn, w_\beta) \hra A^{s_2 +\beta}_{p, q_2} (\rn, w_\gamma)
\end{\eq}
is compact if both $s_1 > \tau^{n+}_p$ and $s_2 < \tau^{n-}_p$. 
\item
Then $\Ft$ in \eqref{4.14} is nuclear if, and only if, both
\begin{\eq}   \label{4.15}
s_1 >
\begin{cases}
n &\text{for $1<p \le 2,$} \\
\frac{2n}{p} &\text{for $2<p <\infty,$}
\end{cases}
\quad and \quad
s_2 <
\begin{cases}
-2n (1 - \frac{1}{p}) &\text{for $1<p \le 2$}, \\
-n & \text{for $2<p <\infty.$}
\end{cases} 
\end{\eq}
\eli
\end{theorem}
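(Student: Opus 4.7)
The plan is to reduce Theorem~\ref{T4.2} directly to the unweighted assertions already established in Section~\ref{S3} by exploiting Proposition~\ref{P4.1}. First I would observe that the mapping \eqref{4.14} is precisely the mapping appearing in Proposition~\ref{P4.1}, equation \eqref{4.6}, in the special case $\alpha_1=\alpha_2=0$. Under the parameter restriction $1<p,q_1,q_2<\infty$, Proposition~\ref{P4.1} then tells us that continuity, compactness and nuclearity of
\[
\Ft: \quad A^{s_1+\gamma}_{p,q_1}(\rn,w_\beta) \hra A^{s_2+\beta}_{p,q_2}(\rn,w_\gamma)
\]
are each equivalent to the corresponding property of the unweighted mapping
\[
\Ft: \quad A^{s_1}_{p,q_1}(\rn) \hra A^{s_2}_{p,q_2}(\rn).
\]
Hence the entire statement reduces to the unweighted case.

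For part (i), I would then apply Corollary~\ref{F-comp-cor} (which is in turn a consequence of Theorem~\ref{Thm-comp} together with the elementary embeddings \eqref{2.8}). This guarantees compactness of the unweighted mapping whenever $s_1 > \tau^{n+}_p$ and $s_2 < \tau^{n-}_p$, and transporting back through Proposition~\ref{P4.1} yields compactness of \eqref{4.14} under the same conditions.

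For part (ii), I would invoke Theorem~\ref{T3.1}, which under the assumption $1<p,q_1,q_2<\infty$ characterises nuclearity of $\Ft: A^{s_1}_{p,q_1}(\rn) \hra A^{s_2}_{p,q_2}(\rn)$ by precisely the conditions \eqref{3.11}. Since \eqref{3.11} coincides with \eqref{4.15}, the equivalence from Proposition~\ref{P4.1} delivers the claimed ``if and only if'' characterisation of nuclearity of \eqref{4.14}.

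There is essentially no genuine obstacle here: all of the real work — the isomorphisms via $W_\beta$ and $I_\gamma$ together with the intertwining identity \eqref{F-I-W} — has already been carried out in Proposition~\ref{P4.1}, while the analytic heart of the matter is contained in Corollary~\ref{F-comp-cor} and Theorem~\ref{T3.1}. The only points to double check are that the parameter regime $1<p,q_1,q_2<\infty$ matches across Proposition~\ref{P4.1}, Corollary~\ref{F-comp-cor} and Theorem~\ref{T3.1} (it does), and that the shifts $s_1 \mapsto s_1+\gamma$, $s_2 \mapsto s_2+\beta$, $\alpha_j \mapsto \alpha_j + \beta$ or $\alpha_j+\gamma$ are applied consistently to produce \eqref{4.14} from \eqref{4.6}; this is the purely bookkeeping step in the argument.
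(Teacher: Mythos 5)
Your proposal is correct and follows exactly the paper's own argument: the paper proves Theorem~\ref{T4.2} by applying Proposition~\ref{P4.1} with $\alpha_1=\alpha_2=0$ and then invoking Corollary~\ref{F-comp-cor} for part (i) and Theorem~\ref{T3.1} for part (ii). Nothing is missing.
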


\begin{proof}
This follows immediately from Proposition~\ref{P4.1} with $\alpha_1 = \alpha_2 =0$ combined with Corollary~\ref{F-comp-cor}  and Theorem~\ref{T3.1}.
\end{proof}

\begin{remark}   \label{R4.3}
It was one of the main aims of \cite{T21} to measure the degree of compactness of
\begin{\eq}   \label{4.16}
\Ft: \quad B^{s_1}_{p, q_1} (\rn) \hra B^{s_2}_{p,q_2} (\rn),
\end{\eq}
$1<p, q_1, q_2 <\infty$ and $s_1 > \tau^{n+}_p$, $s_2 <\tau^{n-}_p$ in terms of entropy numbers. Proposition~\ref{P4.1} and its proof
show that these assertions can also be extended to the compact mappings in \eqref{4.14}.
\end{remark}

\begin{remark}   \label{R4.4}
It is quite obvious that one can relax the assumptions $1<q_1, q_2 <\infty$ for the compact mappings in \eqref{4.14} by $0<q_1, q_2
\le \infty$. This applies also to related entropy numbers as mentioned in Remark~\ref{R4.3}.
\end{remark}

\bigskip\bigskip~
\small

\noindent Dorothee D. Haroske\\
\noindent  Institute of Mathematics,
Friedrich Schiller University Jena, 07737 Jena, Germany\\
\noindent {\it E-mail}:  \texttt{dorothee.haroske@uni-jena.de}

\bigskip

\noindent Leszek Skrzypczak\\
\noindent  Faculty of Mathematics and Computer Science,
Adam Mickiewicz University, Ul. Uniwersytetu Pozna\'nskiego 4, 61-614 Pozna\'n,
Poland\\
\noindent {\it E-mail}:  \texttt{lskrzyp@amu.edu.pl}

\bigskip

\noindent Hans Triebel\\
\noindent  Institute of Mathematics,
Friedrich Schiller University Jena, 07737 Jena, Germany\\
\noindent {\it E-mail}:  \texttt{hans.triebel@uni-jena.de}

\end{document}